\documentclass[a4paper]{amsart}

\usepackage[utf8]{inputenc}
\usepackage{amssymb,pstricks,amscd,mathrsfs}
\usepackage{bbm}
\usepackage[T1]{fontenc}
\usepackage[totalwidth=15cm,totalheight=22cm]{geometry}
\usepackage{esint}
\usepackage{enumerate}
\usepackage[shortlabels]{enumitem}
\usepackage[colorlinks=true,urlcolor=blue,citecolor=red,linkcolor=blue,linktocpage,pdfpagelabels,bookmarksnumbered,bookmarksopen]{hyperref}
\usepackage{xcolor}
\usepackage{extarrows}

\newtheorem{cor}{Corollary}[section]
\newtheorem{thm}[cor]{Theorem}
\newtheorem{prop}[cor]{Proposition}
\newtheorem{lemma}[cor]{Lemma}

\theoremstyle{definition}
\newtheorem{defi}[cor]{Definition}
\theoremstyle{remark}
\newtheorem{remark}[cor]{Remark}


\newcommand{\N}{{\mathbb N}}
\newcommand{\Q}{{\mathbb Q}}

\newcommand{\R}{{\mathbb R}}

\renewcommand{\S}{{\mathbb S}^m}
\newcommand{\Ri}{\mathbb{R}^+ \cup \{+\infty\}}

\newcommand{\B}{\mathcal B}
\newcommand{\E}{\mathfrak{E}}
\newcommand{\G}{\mathfrak{G}}
\newcommand{\K}{\mathcal K}

\def\res{\mathop{\hbox{\vrule height 7pt width .5pt depth 0pt \vrule height .5pt width 6pt depth 0pt}}\nolimits}

\def\Ga{\Gamma (\lambda,\mu)}
\newcommand{\be}{\begin{eqnarray}}
\newcommand{\ee}{\end{eqnarray}}

\renewcommand{\vec}[1]{\overrightarrow{#1}}
\renewcommand{\tilde}[1]{\widetilde{#1}}
\renewcommand{\bar}[1]{\overline{#1}}
\renewcommand{\hat}[1]{\widehat{#1}}
\newcommand{\ep}{\varepsilon}
\newcommand{\supp}{\mbox{supp}\,}

\newcommand{\di}{\mathrm{Diam \,}}
\renewcommand{\phi}{\varphi}

\begin{document}

\title{On the Gauss image problem}

\author{J\'er\^ome Bertrand}
\address{Institut de Math\'ematiques de Toulouse, UMR CNRS 5219 \\
Universit\'e Toulouse III \\
31062 Toulouse cedex 9, France}
\email{bertrand@math.univ-toulouse.fr}
\date{}
\thanks{\noindent {\it Mathematics Subject Classification} (2020):  49Q22, 52A20, 52A38, 52A40, 52C17.\\ \hspace*{0.5 cm}{\it Keywords: }Convex bodies, Optimal mass transport, Gauss image problem.}

\begin{abstract}
In this note, we solve the Gauss image problem given two Borel measures on the unit sphere, one of which is absolutely continuous with respect to the uniform measure. 

\end{abstract}
\maketitle




\section{Introduction}

In this note, we consider the Gauss image problem. Namely, given two (sub)-measures $\lambda$ and $\mu$ on $\S$, does there exist a convex body $K \subset \R^{m+1}$ with the origin in its  interior such that, viewed as a function on the Lebesgue $\sigma$-algebra,
\begin{equation}\label{eq-GIPro}
\mu = \lambda \circ (\mathcal{G}_K \circ \vec{\rho}_K),
\end{equation}
where $\mathcal{G}_K$ is  the Gauss (multi-)valued map and $\vec{\rho}_K: \S \longrightarrow \partial K$ is the radial map  of the convex body $K$. Let us recall that for $k \in \partial K$, $\mathcal{G}_K(k)$ is the set of unit outward normal vectors to $\partial K$ at $k$.

When $\lambda$ is  the uniform probability measure on $\S$, this problem is known as Aleksandrov's curvature prescription problem. It has been solved by Aleksandrov himself and was (re)-proved by various means since then. One of them is based on the theory of optimal mass transport \cite{Oliker, Bertrand-GeomD}; this is also the method used in this note.

\smallskip

The following condition is known as Aleksandrov's condition:

\begin{defi}[Aleksandrov's condition]\label{def-alex-cond}
Two probability measures $\lambda$ and $\mu$ are said to be Aleksandrov related if for any  closed spherically convex subset $\omega \subsetneq \S$:

$$\lambda (\omega^*) + \mu(\omega) < 1,$$
where $\omega^*=\{x \in \S; \forall w \in \omega \; \langle x, w\rangle \leq 0\}$.
\end{defi}

Aleksandrov proved that when $\lambda$ is the uniform probability measure, the problem \eqref{eq-GIPro} can be solved if and only if $\lambda$ and $\mu$ are Aleksandrov related, we refer to his book \cite{AlexCP} for more on this.

In 2020, Böröczky {\it et al} solved the Gauss image problem for probability measures satisfying Aleksandrov's condition and such that $\lambda$ is absolutely continuous with respect to the uniform measure on $\S$ \cite{Boroczky-20}. See also \cite[Theorem 1.7]{Bertrand-GeomD} for an earlier proof in disguise of this result. Several noticeable measures in convex geometry satisfy relations like \eqref{eq-GIPro} for well-chosen absolutely continuous measures on $\S$, this is for instance the case of the \emph{dual curvature measures}, we refer to \cite{huang} for more on these measures.

In this note we shall solve the Gauss image problem under a weaker assumption than Aleksandrov's one.

\begin{defi}\label{def-WAA}
We say that the probability measures $\mu$ and $\lambda$ satisfy the \emph{weak Aleksandrov condition} if there exists $\alpha \in (0,\pi/2)$ such that for any closed set $F$ contained in a closed hemisphere of $\S$ the following inequality holds:
\begin{equation}\label{eq-WAA}
\mu(F) \leq \lambda(F_{\pi/2-2\alpha}), 
\end{equation}
where $F_{\pi/2-2\alpha} =\{n \in \S; \exists\, x \in F, \langle n , x \rangle > \cos (\pi/2 -2\alpha) \}$. 
\end{defi}

\begin{remark}
For a closed spherically convex subset  $\omega \subsetneq \S$, the set $\omega^*$ can also be defined by the equality $ \S \setminus \omega^*= \omega_{\pi/2}$, see for instance \cite{Bertrand-GeomD}. Aleksandrov's condition can thus be rephrased as 
$$ \mu (\omega) < \lambda(\omega_{\pi/2}).$$ 

Note that Aleksandrov's condition prevents the measure $\mu$ from being supported on a closed hemisphere.

\end{remark}

The fact that Aleksandrov's condition implies \eqref{eq-WAA} was proved in \cite[Proposition 3.5]{Bertrand-GeomD}. Moreover, the weak Alexandrov condition is the sharp condition for the Gauss image problem:

\begin{remark} Let $K\subset \R^{m+1}$ be a compact convex set with the origin in its interior then the angle between a point in the boundary of $K$ and an outward normal vector to $K$ at that point is uniformly bounded above by a constant $\pi/2-\alpha<\pi/2$. In other terms, \eqref{eq-WAA} is a necessary condition for the Gauss image problem \eqref{eq-GIPro} to have a solution, see \eqref{eq-angle-bound} for more details. 

On the contrary, Aleksandrov's condition is \emph{not} necessary when the support of $\lambda$ is not the whole sphere \cite{Semenov-exi}.

\end{remark}

\smallskip

The main result of this note is  the following theorem.

\begin{thm}\label{main} Let $\lambda$ and $\mu$ be two probability measures on the unit sphere $\S$ and assume that $\lambda$ is absolutely continuous with respect to the uniform measure and $\mu$ is not concentrated on a closed hemisphere. Then the Gauss image problem for $\lambda$ and $\mu$ admits a solution provided that $\lambda$ and $\mu$ satisfy the weak Aleksandrov condition. 

Assume there are two convex bodies $K,L \subset \R^{m+1}$ with the origin in their interiors and solutions to \eqref{eq-GIPro}. Then, the following holds for any Borel set $\omega$
\begin{equation}\label{eq-fo-uni}
 \lambda ((\mathcal{G}_k\circ \vec{\rho}_K(\omega)) \Delta \, (\mathcal{G}_L\circ \vec{\rho}_L(\omega)))=0,
\end{equation}
where $A \Delta B$ is the symmetric difference of the sets $A$ and $B$.
\end{thm}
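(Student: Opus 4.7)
The plan is to cast the Gauss image problem as an optimal mass transport problem on $\S \times \S$ endowed with the cost $c(x,n) = -\log \langle x,n\rangle$ when $\langle x,n\rangle > 0$, and $c(x,n) = +\infty$ otherwise; this is the cost already underpinning the optimal transport approach of \cite{Oliker, Bertrand-GeomD}. The motivation is the identity $r_K(x)\langle x,n\rangle = h_K(n)$ valid whenever $n \in \mathcal{G}_K(\vec{\rho}_K(x))$, so that $c(x,n) = \log r_K(x) - \log h_K(n)$ along the graph of $\mathcal{G}_K \circ \vec{\rho}_K$. Thus the potentials $\phi(x) := \log r_K(x)$ and $\psi(n) := -\log h_K(n)$ realize equality in the Kantorovich dual inequality $\phi(x) + \psi(n) \leq c(x,n)$ precisely along that graph, strongly suggesting both that $\mathcal{G}_K \circ \vec{\rho}_K$ is the $c$-optimal map transporting $\lambda$ to $\mu$, and that $K$ can be reconstructed from the dual potential.

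I would first establish feasibility: the weak Aleksandrov condition, together with the non-concentration hypothesis on $\mu$, must yield a transport plan $\pi$ with marginals $\lambda$ and $\mu$ whose support is contained in $R_\alpha := \{(x,n) : \langle x,n\rangle \geq \cos(\pi/2 - 2\alpha)\}$. This is a Strassen--Kellerer type assertion: couplings carried by a closed relation are characterized by a marginal inequality, which in our setting is exactly $\mu(F) \leq \lambda(F_{\pi/2-2\alpha})$ for closed $F$. The condition \eqref{eq-WAA} gives this directly for $F$ contained in a hemisphere, and the case of $F$ not in a hemisphere requires the non-concentration hypothesis on $\mu$ in order to reduce to the hemispherical case. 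On $R_\alpha$ the cost $c$ is bounded and continuous, so an optimal plan $\pi^\ast$ exists by standard compactness. Since $\lambda$ is absolutely continuous with respect to uniform measure, a Brenier-type argument tailored to this spherical cost produces $c$-concave potentials $\phi,\psi$ on $\S$ and shows that $\pi^\ast$ is induced by a $\lambda$-a.e.\ single-valued map $T$.

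Next I would reconstruct the convex body. Setting $h(n) := e^{-\psi(n)}$ on $\supp \mu$, extending $h$ positively $1$-homogeneously on $\R^{m+1}$, and taking the convex envelope, I obtain the support function of a convex body $K$. The uniform angle margin $\alpha$ encoded in the constraint $\supp \pi^\ast \subset R_\alpha$ ensures that $h$ is bounded above and below by positive constants, which forces $K$ to be bounded and to contain the origin in its interior. The Fenchel-type equality $\phi(x) + \psi(n) = -\log\langle x,n\rangle$ along $\supp \pi^\ast$ then reads exactly as the condition $n \in \mathcal{G}_K(\vec{\rho}_K(x))$, so $T(x) = \mathcal{G}_K(\vec{\rho}_K(x))$ for $\lambda$-a.e.\ $x$; pushing $\lambda$ forward gives \eqref{eq-GIPro}.

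For uniqueness, suppose $K$ and $L$ are both solutions. The associated Gauss image maps both transport $\lambda$ onto $\mu$, and the construction in the previous paragraph, applied in reverse with the geometric potentials $(\log r_K, -\log h_K)$ and $(\log r_L, -\log h_L)$, shows that both maps are $c$-optimal. Uniqueness of the $c$-optimal map under $\lambda \ll \mathrm{unif}$ forces them to coincide $\lambda$-almost everywhere; \eqref{eq-fo-uni} follows because the Gauss image maps are $\lambda$-essentially injective, so images of $\lambda$-null sets remain $\lambda$-null. I expect the principal obstacle to be the identification of the abstract $c$-optimal map $T$ with the geometric Gauss image map of a \emph{bona fide} convex body: one must prove that the dual potential $\psi$ delivered by Kantorovich duality really is the support function of a compact convex set containing the origin in its interior. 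This is precisely where the weak Aleksandrov condition, through the uniform margin $2\alpha$, is decisive: it prevents the degenerations in which the candidate body would collapse to lower dimension or its boundary would approach the origin.
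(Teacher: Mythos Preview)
Your overall architecture matches the paper's: recast the problem as optimal transport for the cost $c(n,x)=-\log\langle n,x\rangle$, use the weak Aleksandrov condition to produce a finite-cost plan, obtain a Kantorovich potential, and read off the convex body via Oliker's change of functions. But there is a genuine gap at the step you pass over most quickly.

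You write that ``a Brenier-type argument tailored to this spherical cost produces $c$-concave potentials $\phi,\psi$ on $\S$.'' This is exactly the place where standard theory breaks down and where the paper concentrates essentially all of its technical work. Because $c$ takes the value $+\infty$ on a set of positive measure, the usual Kantorovich duality does \emph{not} guarantee the existence of a maximizing pair in the dual problem; the Rockafellar--R\"uschendorf construction can return a function that is $-\infty$ on part of the sphere, which then fails to define a convex body in $\K_0$. The paper's main contribution (Sections~4--5) is a new construction of the potential: it discretizes $\lambda$ and $\mu$ at scale $\alpha$, builds a finite graph on the product partition whose edges record where the finite-cost plan $\pi^{\alpha}$ and the optimal plan $\pi_o$ put mass, proves every edge lies on a cycle, and shows that the subgraph over each $\kappa$-chain connected component of $\supp\mu$ is connected. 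Only with this combinatorial control can one bound the Rockafellar-type formula and obtain a genuine Lipschitz potential $\phi$ with $\phi^{cc}=\phi$ and $\supp\pi_o\cap\{c<\infty\}\subset\partial_c\phi$. Your sketch contains no mechanism for this, and without it the argument does not close.

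A related slip: you assert that the optimal plan itself satisfies $\supp\pi^\ast\subset R_\alpha$ and later use this ``uniform angle margin'' to keep the reconstructed body nondegenerate. Only the \emph{feasible} plan $\pi^{\alpha}$ is known to live in $R_\alpha$; the optimal plan $\pi_o$ need not. In the paper the nondegeneracy of $K$ comes instead from the Lipschitz bounds on $\phi$ and $\phi^c$ (Proposition~\ref{prop-LipRegPo}), which in turn rely on the graph construction and on the hypothesis that $\supp\mu$ meets every open hemisphere. Your uniqueness paragraph is essentially correct in spirit and close to the paper's Theorem~\ref{th-uniqueness}, though note that the paper argues via the inclusion $\supp\pi_o\cap\{c<\infty\}\subset\partial_c\phi_K\cap\partial_c\phi_L$ rather than by invoking uniqueness of an optimal map.
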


\begin{remark}
We are currently not interested in submeasures so we don't know whether our proof can be adapted to this larger framework. Moreover, to our knowledge, the interesting examples are all measures. 
\end{remark}

The second statement \eqref{eq-fo-uni} is a result of uniqueness for multivalued map up to a $\lambda$-negligible set.

\smallskip

In late 2022, V. Semenov put on Arxiv a paper in which he solves the Gauss image problem for $\lambda$ an absolutely continuous measure and $\mu$ a measure with finite support (which corresponds to the case where the underlying convex body is a convex \emph{polyhedron}) \cite{Semenov-exi}. The uniqueness property stated in Theorem \ref{main} is also proved by another method in \cite{Semenov-uni}; our proof is shorter. Our proofs are not based on the aformentioned articles, instead our approach can be seen as a generalisation of the method introduced in our earlier work \cite{Bertrand-GeomD}.

Building on \eqref{eq-fo-uni}, Semenov proves the following corollary.

\begin{cor}[\cite{Semenov-uni}] Under the assumptions of Theorem \ref{main}, the solution to the Gauss image problem is uniquely determined \emph{up to dilation} on each \emph{rectifiable} path connected component of $\supp \lambda$.
\end{cor}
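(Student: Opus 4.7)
The plan is to extract from the set-theoretic uniqueness \eqref{eq-fo-uni} a pointwise agreement of the Gauss image maps $\alpha_K := \mathcal{G}_K \circ \vec{\rho}_K$ and $\alpha_L := \mathcal{G}_L \circ \vec{\rho}_L$ on $\supp \lambda$, and then to integrate along an arbitrary rectifiable path the classical first-order identity relating the spherical gradient of $\log \rho_K$ to $\alpha_K$.

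Since $\lambda \ll \mathcal{H}^m$ and the set where $\mathcal{G}_K$ (resp.\ $\mathcal{G}_L$) is multi-valued has vanishing $\mathcal{H}^m$-measure on $\partial K$ (resp.\ $\partial L$), the maps $\alpha_K, \alpha_L$ restrict to Borel single-valued maps $\beta_K, \beta_L$ on a Borel subset $\Sigma \subset \S$ of full $\lambda$-measure. I would then show $\beta_K = \beta_L$ holds $\lambda$-almost everywhere by a covering argument: decompose the complement of the diagonal in $\S \times \S$ into a countable family of disjoint Borel rectangles $U_i \times V_i$ with $U_i \cap V_i = \emptyset$, and set $E_i := (\beta_K,\beta_L)^{-1}(U_i\times V_i) \subset \Sigma$; if some $E_i$ has positive $\lambda$-measure, the images $\beta_K(E_i) \subset U_i$ and $\beta_L(E_i) \subset V_i$ are disjoint, so \eqref{eq-fo-uni} applied to $\omega=E_i$ forces $\lambda(\beta_K(E_i)) = \lambda(\beta_L(E_i)) = 0$. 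The remaining (and most delicate) point is that this cannot occur under the Gauss image equation for a convex body: one has to exploit the intrinsic relation $\mu(\beta_K(E_i)) \geq \lambda(E_i)$ coming from the pushforward $\mu = (\alpha_K)_*\lambda$, together with the symmetric relation for $L$, to conclude. The $\lambda$-a.e.\ coincidence is then upgraded to every point of $\supp\lambda$ by upper semicontinuity of $\alpha_K,\alpha_L$: for $u \in \supp\lambda$, approximate by $u_n \to u$ from the coincidence set and pass to a subsequence with $\beta_K(u_n)=\beta_L(u_n) \to n^*$; upper semicontinuity gives $n^* \in \alpha_K(u)\cap\alpha_L(u)$, so this intersection is nonempty at every point of $\supp\lambda$.

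Given a rectifiable path $\gamma : [0,1] \to \supp \lambda$, I would pick a Borel selection $t \mapsto n(t) \in \alpha_K(\gamma(t))\cap\alpha_L(\gamma(t))$. The curve $c_K(t) := \rho_K(\gamma(t))\gamma(t)\in\partial K$ is Lipschitz, since $\rho_K$ is Lipschitz on $\S$, and at Lebesgue-a.e.\ $t$ the tangent $\dot c_K(t)$ is orthogonal to the normal $n(t)$; expanding this orthogonality yields
\begin{equation*}
\frac{d}{dt}\log \rho_K(\gamma(t)) \;=\; -\frac{\langle \dot\gamma(t),\, n(t)\rangle}{\langle \gamma(t),\, n(t)\rangle}
\end{equation*}
a.e., and the same identity holds for $\log \rho_L$ with the identical right-hand side, since $n(t)\in\alpha_L(\gamma(t))$ too. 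Subtracting and integrating on $[0,1]$ gives $\log(\rho_K/\rho_L)\circ\gamma \equiv \mathrm{const}$; thus $\rho_K = c\,\rho_L$ on the rectifiable path component of $\gamma(0)$, which is exactly the dilation uniqueness asserted.

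The main obstacle is the first step: converting the set-theoretic statement \eqref{eq-fo-uni} into an honest pointwise $\lambda$-a.e.\ equality $\beta_K=\beta_L$. The direct-image relation appearing in \eqref{eq-fo-uni} is not a standard pushforward and, in isolation, can be shown to be vacuous (e.g.\ both sides automatically discrete when $K,L$ are polyhedra), so one really must play the Gauss image equation for $K$ against that for $L$ — using that $\mu = (\alpha_K)_*\lambda = (\alpha_L)_*\lambda$ — to extract the contradiction. A secondary technical issue is justifying the chain-rule computation in the final step at non-smooth boundary points of $K$ or $L$ encountered by $\gamma$; this should be handled via a BV-type chain rule or by approximating $\gamma$ and $K,L$ by smoother objects.
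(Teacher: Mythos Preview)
The paper does not supply its own proof of this corollary; it is simply attributed to \cite{Semenov-uni}. So there is no reference argument to compare against, and I evaluate your proposal on its own.

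Your overall strategy (extract a pointwise equality of Gauss maps from \eqref{eq-fo-uni}, then integrate a first–order identity along rectifiable paths) is the right one, but you have run it with the wrong map, and this produces a genuine gap. The Gauss image relation is \emph{not} a pushforward $\mu=(\alpha_K)_\ast\lambda$; rather, with $T_K:n\mapsto x$ the (a.e.\ single–valued) inverse of $\alpha_K$, one has $\mu=(T_K)_\ast\lambda$, i.e.\ $\mu(\omega)=\lambda(\alpha_K(\omega))$. If you track your covering argument carefully, the conclusion you actually obtain from \eqref{eq-fo-uni} together with the Gauss image equation is $\mu(E_i)=\lambda(\alpha_K(E_i))=0$, hence $\beta_K=\beta_L$ \emph{$\mu$-a.e.}, not $\lambda$-a.e. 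Your density upgrade then yields $\alpha_K(u)\cap\alpha_L(u)\neq\emptyset$ for $u\in\supp\mu$, not for $u\in\supp\lambda$, so when you take $\gamma:[0,1]\to\supp\lambda$ you cannot in general make the selection $n(t)$ you need. Even if you redirect the path into $\supp\mu$, the integration produces $\rho_K=c\,\rho_L$ on rectifiable components of $\supp\mu$, which is the dual statement and not what the corollary asserts (uniqueness of $h_K$ up to a factor on components of $\supp\lambda$).

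The fix is to work on the $n$–side throughout: from \eqref{eq-fo-uni} and $\alpha_K(\omega)=T_K^{-1}(\omega)$ (up to a $\lambda$–null set) one gets $\lambda\big(T_K^{-1}(\omega)\,\Delta\,T_L^{-1}(\omega)\big)=0$ for every Borel $\omega$, and your own covering argument, now applied to $T_K,T_L$, gives $T_K=T_L$ $\lambda$-a.e. Since $T_K(n)$ is determined by $\nabla\phi_K(n)=\nabla(-\log h_K)(n)$ at differentiability points, this yields $\nabla\log h_K=\nabla\log h_L$ $\lambda$-a.e., and integrating along rectifiable paths in $\supp\lambda$ gives $h_K=c\,h_L$ on each such component. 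The chain–rule issue you flag is real here too: a rectifiable $\gamma$ may sit in the nondifferentiability set of $h_K$ on a set of positive $\mathcal H^1$–measure, so one must either perturb $\gamma$ inside $\supp\lambda$ to avoid this set (using that every ball centred in $\supp\lambda$ has positive $\lambda$–measure) or invoke a genuine BV/chain–rule argument.
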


One could expect the solution of the Gauss image problem to be uniquely determined on each \emph{connected} component of $\supp \lambda$; this question is related to fine properties of compact Euclidean sets supporting an absolutely continuous measure and might depend on the dimension of the space. To our knowledge, this is an open question.

\smallskip

Compared to our earlier work \cite{Bertrand-GeomD}, the main new ingredient is a new method to construct a \emph{Kantorovitch potential}, a classical object in the theory of optimal mass transport that happens to define a convex body when the cost function is chosen properly. This construction seems to be the first one where no connectedness of the support of the measure is required (actually what really matters in the previous works is the fact that all the elements of the support are contained in the same equivalence class relative to a relation defined in terms of the cost function: we remove this restriction), compare to \cite{Bertrand-GeomD, BP, BPP, arstein}. This improved construction is expected to be useful in other contexts.

In the next section, we recall some classical notions in convex geometry and introduce related notation. The third part provides a quick introduction to optimal mass transport together with explanations regarding the connections between the Gauss image problem and optimal mass transport. In Section 4, we introduce some suitable discretisations of the measures involved before studying combinatorial properties of graphs related to these discretisations. Finally, in the last part we construct the Kantorovitch potential then prove \eqref{eq-fo-uni}; these properties complete the proof of our main theorem.



\section{Preliminaries}

We assume the reader is familiar with basic results in convex geometry, we refer to the textbooks \cite{Rocka70, schneider} for a detailed introduction. We denote by $C(\S)$  the set of continuous functions defined on $\S$ while we write $C^+(\S)$ for the set of positive continuous functions. To a function $\rho \in C^+(\S)$ corresponds a star-shaped compact set $D$ with respect to the origin. The set $D$ is defined by the property that its boundary $\partial D$ is biLipschitz homeomorphic to $\S$ through the \emph{radial map}

$$\vec{\rho} := \left\{\begin{array}{lcl}
\S & \longrightarrow & \partial D \\
x & \longmapsto & \rho(x)\, x
\end{array}\right.
$$

The function $\rho$ is called \emph{the radial function} of $D$, we also use the notation $\rho_D$ to emphasize the relation with the set $D$. Its definition can be extented to $\R^{m+1}$ as $x \longmapsto 1/|x| \rho(x)$ where $|\cdot|$ stands for the Euclidean norm on $\R^{m+1}$.
The set of all such star-shaped compact sets is denoted by $\mathcal{E}$. 

The \emph{support function} of $D \in \mathcal{E}$ is the function $h_D: \R^{m+1} \longrightarrow (0,+\infty)$ defined by 
\begin{eqnarray}\label{eq-def-support}
h_D(n) &= & \max \{ \langle x, n\rangle; x \in D\} \nonumber \\
     &=& \max  \{ \rho_D(x) \langle x,n\rangle; x \in \S\}
\end{eqnarray}

We recall the definition of the \emph{polar set} $D^{\circ}$ of $D$ as
$$D^{\circ} = \{n \in \R^{m+1}; \forall x \in D, \langle n,x\rangle \leq 1\}= \{n \in \R^{m+1}; \forall x \in \S, \rho_D(x)\langle n,x\rangle \leq 1\}.$$

By definition of the polar transform $D^{\circ\circ}:= (D^{\circ})^{\circ}$ always contain $D$. Moreover, the equality $D^{\circ\circ}=D$ holds if and only if $D$ is a compact convex set that contains the origin in its interior. In particular, if we let $\K_0$ be the set of compact convex sets with the origin $0 \in \R^{m+1}$ in their interior, the polar transform is an \emph{involution} from $\K_0$  to itself. We shall simply call \emph{convex body} an element of $\K_0$. Moreover, the inclusion $D^{\circ\circ} \supset D$ can be rephrased as $\rho_{D^{\circ\circ}} \geq \rho_D$ and the last equality holds as functions iff $D$ is a convex body.

For $K\in \K_0$, let $\varepsilon>0$ be such that the open Euclidean ball of radius $\varepsilon$ centered at the origin is contained in $K$. Then, for $n,x \in \S$ such that $h_K(n)= \rho_K(x) \langle n,x\rangle $, we infer
\begin{equation}\label{eq-angle-bound}
 \langle n,x\rangle =\frac{h_K(n)}{\rho_K(x)} > \frac{\varepsilon}{\max \rho_K} =:\varepsilon' >0
\end{equation}
and thus \eqref{eq-WAA} holds for any closed set where $\alpha= 1/2(\pi/2 - \arccos (\varepsilon'))$.

\smallskip

Another important connection between the polar transform of $D \in \mathcal{E}$ and the radial and support functions is given by the relation
$$\rho_{D^{\circ}} = \frac{1}{h_D}.$$

It can be used to rephrase the equality $\rho_{D^{\circ\circ}} = \rho_D$ in terms of $\rho_D$ and $h_D$ only:
\begin{equation}\label{eq-radial} 
\rho_D(x) =\rho_{D^{\circ \circ}}(x)=  \frac{1}{h_{D^{\circ}}}(x) =\frac{1}{\sup_{n \in \S}\frac{\langle x,n\rangle}{h_D(n)}}=\inf_{n \in \S; \langle x,n\rangle >0} \left\{\frac{h_D(n)}{\langle x,n\rangle}\right\}.  
\end{equation}

The expression of $\rho_D$ in terms of $h_D$ is somehow similar without being identical to \eqref{eq-def-support}. Oliker's change of functions \cite{Oliker} allows one to recover a symmetrical and additive expression of the new functions.

\begin{defi}[Oliker's change of functions]
For $D \in \mathcal{E}$, set

$$\phi (n) =-\ln (h_D(n)) \mbox{   and   }  \psi(x)= \ln(\rho_D(x)).$$

Then $D$ is a convex body if and only if

\begin{equation}\label{eq-rad-supp-rela}\left\{ \begin{array}{lcl}
\phi(n) &=& \min c(n,x) - \psi (x) \\
\psi(x)&=& \min c(n,x) -\phi(n),
\end{array}\right.
\end{equation}

where the function $c: \S \times \S \longrightarrow [0,+\infty]$ is defined by $c(n,x) =-\ln  \langle n, x \rangle   \mbox{ if } \langle n,x\rangle  >0$ and $+\infty$ otherwise.

\end{defi}

This follows from simple computations that can be found in \cite{Oliker,Bertrand-GeomD}. The above relations between $\phi$ and $\psi$ are very classical in the theory of optimal mass transport. These links between the Gauss image problem and optimal mass transport are to be described in the next part.

\section{Optimal mass transport}\label{CP&OMT}


In this part, we  briefly describe the optimal mass transport problem on $\S$ and introduce related notation. For more on the subject, we refer to \cite{Vi03,Vi08}. This problem involves two Lebesgue probability measures $\mu,\lambda $ on $\S$, and a cost function $c: \S\times \S \rightarrow \Ri $. We also need to introduce the set of \emph{transport plans} $\Gamma(\lambda,\mu)$, namely the set of probability measures $\pi \in \mathcal{P}(\S\times \S)$ such that for any Lebesgue set $A\subset \S$
\begin{equation}\label{planmargi} \lambda(A) = \pi(A\times \S) \mbox{  and  } \mu(A)= \pi(\S\times A).
\end{equation}

Let us recall the cost function $c$ we consider
\begin{equation}\label{costdefi}
 c(n,x) = \left\{\begin{array}{lr}
-\log \langle n, x \rangle = -\log \cos d(n,x) & \mbox{ if } d(n,x) < \pi/2 \\
+ \infty & \mbox{ otherwise, }
\end{array}\right.
\end{equation}
where $d(n,x)$ stands for the spherical distance between $n$ and $x$.

The cost function $c$ satisfies a standard set of assumptions in the field  with the noticeable exception that it is \emph{not} real-valued. Therefore, some standard results do not apply to $c$. We gather the easy-to-prove properties of the cost function in the lemma below.  
\begin{lemma} The cost function $c: \S \times \S \longrightarrow \Ri $ defined in (\ref{costdefi}) is a continuous map. Moreover, restricted to the open set $\{c < + \infty\}$, the function $c$ is a strictly convex and increasing smooth function of the spherical distance. Consequently, for $(n,x)$ in any fixed open set $\Omega$ such that  
$\bar{\Omega} \subset \{c < + \infty\}$, the function $(n,x) \mapsto c(n,x)$ is a Lipschitz differentiable function on $\Omega$.
\end{lemma}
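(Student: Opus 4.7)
The plan is to verify the three assertions---continuity as a map into $\Ri$, strict convexity/monotonicity/smoothness on the finiteness locus, and Lipschitz differentiability on relatively compact subdomains---directly from the explicit formula $c(n,x)=-\log\langle n,x\rangle$ when $\langle n,x\rangle>0$, using only elementary calculus together with one point of vigilance concerning the topology on $\Ri=\R^+\cup\{+\infty\}$.

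For continuity, I would split $\S\times \S$ into three regions according to the sign of $\langle n,x\rangle$. On the open set $\{\langle n,x\rangle>0\}$, the cost is the composition of the continuous scalar product (taking values in $(0,1]$) with $-\log:(0,1]\to[0,+\infty)$, so it is continuous there. On the open set $\{\langle n,x\rangle<0\}$ it is identically $+\infty$. At a boundary point $(n_0,x_0)$ with $\langle n_0,x_0\rangle=0$, one has $c(n_0,x_0)=+\infty$; any approaching sequence $(n_k,x_k)\to (n_0,x_0)$ from the positive region satisfies $\langle n_k,x_k\rangle\to 0^+$, hence $c(n_k,x_k)\to +\infty$, which is indeed the correct limit in $\Ri$. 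Since nothing else can happen near the boundary, continuity as a map into $\Ri$ follows. This boundary behaviour is really the only (mild) obstacle; the rest is routine.

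For the qualitative properties on $\{c<+\infty\}$, I would set $g(t):=-\log\cos t$ on $[0,\pi/2)$ and compute $g'(t)=\tan t$ and $g''(t)=\sec^2 t$, both strictly positive on $(0,\pi/2)$; thus $g$ is smooth, strictly increasing, and strictly convex. Writing $c(n,x)=g(d(n,x))$ transfers these properties verbatim. Finally, for the Lipschitz differentiability, fix $\Omega$ open with $\bar{\Omega}\subset\{c<+\infty\}$: by compactness of $\bar{\Omega}$ and continuity of $\langle\cdot,\cdot\rangle$ there exists $\delta>0$ with $\langle n,x\rangle\geq \delta$ on $\bar{\Omega}$, so $c=-\log\langle\cdot,\cdot\rangle$ is smooth on an open neighbourhood of $\bar{\Omega}$. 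Its first derivatives are therefore bounded on $\bar{\Omega}$, and combined with geodesic convexity of small balls in $\S\times\S$, the mean value inequality yields the announced Lipschitz estimate on $\Omega$.
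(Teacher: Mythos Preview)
Your proof is correct and is precisely the kind of direct verification the paper has in mind: the lemma is stated there without proof, the properties being described as ``easy-to-prove'' and left to the reader. Your three-region argument for continuity, the computation $g'(t)=\tan t$, $g''(t)=\sec^2 t$ for monotonicity and strict convexity, and the compactness argument giving a uniform lower bound $\langle n,x\rangle\ge\delta$ on $\bar{\Omega}$ are exactly the natural checks.
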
 

The mass transport problem consists in studying

\begin{equation}\label{massdefi}
 \inf_{\pi \in \Ga} \int_{\S \times \S} c(n,x)\, d\pi(n,x).
 \end{equation}

It is customary in the field to assume that the mass transport problem is well-posed, namely that the infimum in \eqref{massdefi} is finite. The well-posedness of the problem is not an immediate consequence of the weak Aleksandrov condition and requires to be proved. 
The scheme of proof of the well-posedness is identical to that in our paper \cite{Bertrand-GeomD}, indeed only the weak Aleksandrov condition is required in that part. Details are given in the appendix.

Equipped with the topology induced by the weak convergence of probability measures, the set $\Ga$ is a \emph{compact set} as a consequence of the Banach-Alaoglu theorem. Therefore by combining this compactness property together with the continuity of the cost function, we infer the existence of minimizers in the problem above whenever the infimum is finite. These minimizers are called \emph{optimal} transport plans.

In order to tackle the Gauss image problem we shall focus our attention on a \emph{dual problem} to the mass transport problem introduced by Kantorovitch.

Let us define ${\mathcal A}$ as the set of pairs $(\phi,\psi)$ of Lipschitz functions defined on $\S$ that satisfy
\begin{equation}\label{eq-def-A}
\phi(n)+\psi(x) \leq c(n,x) \mbox{ for all }x,n \in \S.
\end{equation}
Kantorovitch's variational problem consists in studying
\begin{equation}\label{massdefi2}
\sup_{(\phi,\psi) \in \mathcal{A}}  \left\{ \int_{\S} \phi(n) d\lambda(n) + \int_{\S} \psi(x) d\mu(x)\right\}. 
\end{equation}

It is easy to see that the quantity above is always smaller or equal to (\ref{massdefi}). Indeed, given $(\phi,\psi) \in \mathcal{A}$ and $\pi \in \Ga$, we have

\begin{equation}\label{eq-Kanto-ineq}
\int_{\S} \phi(n) \,d\lambda(n)+ \int_{\S}\psi(x) \, d\mu(x)= \int_{\S\times \S} (\phi(n)+\psi(x)) \,d\pi(n,x) \leq \int_{\S\times \S} c(n,x) \, d\pi(n,x).
\end{equation} 

It can be proved that $ (\ref{massdefi}) =(\ref{massdefi2})$ whenever the cost function is continuous and nonnegative; this type of result is called \emph{Kantorovitch's duality}.   However, solution to Kantorovitch's dual problem may not exist for the cost \eqref{costdefi} because this function is not real-valued. We refer to \cite{Vi08} for more on this.

Our main goal is to prove the existence of a solution to the dual problem under the hypotheses of Theorem \ref{main}. In order to explain our method, let us first recall the definition of the \emph{$c$-transform} of a function $f \in C(\S)$ (the $c$-transform is well-defined whenever $f$ is bounded above): 
$$f^c(n):= \inf_{x \in \S}\, c(n,x) - f(x).$$
Since the cost function is symmetric, the same definition applies to functions of the variable $n$. 

\smallskip

Given that the functional

$$(\phi, \psi) \longmapsto   \int_{\S} \phi(n) \,d\lambda(n)+ \int_{\S}\psi(x) \, d\mu(x)$$

is nondecreasing with respect to each variable, and the constraint \eqref{eq-def-A}, it is not surprising to seek for maximisers of Kantorovitch's problem among pairs of functions of the form $(\phi, \phi^c)$ where $\phi^{cc} := (\phi^c) ^c$ coincides with the function $\phi$. Such a function $\phi$ is called a \emph{Kantorovitch potential}, besides $h(n) := e^{-\phi(n)}$ and $\rho(x):= e^{\phi^c(x)}$ satisfy \eqref{eq-radial} and thus determine a unique convex body.

Given a Kantorovitch potential $\phi$, the  \emph{$c$-subdifferential of $\phi$}, whose definition is recalled below, is a useful set in connection with the mass transport problem:
\begin{equation}\label{def-csubdif}
\partial_c \phi := \{(n,x)\in \S\times \S; \phi(n) + \phi^c(x)=c(n,x)\}.
\end{equation}

The $c$-subdifferential of a function is a particular instance of a \emph{$c$-cyclical monotone set} $S \subset \S\times \S$ whose definition is
\begin{equation}\label{eq-def-cmono}
\sum_{1\leq i \leq k} c(n_{i+1},x_i) \geq \sum_{1\leq i \leq k} c(n_{i},x_i),
\end{equation} 
where $k$ is any positive integer, $(n_1,x_1),\cdots, (n_k,x_k) \in S$ are arbitrary, and $n_{k+1}=n_1$.

This condition is equivalent to optimality for mass transport problem involving finitely supported measures. More in general, the support of an \emph{optimal} transport plan in our set-up has to be a $c$-cyclically monotone set. We refer to \cite[Chapter 5]{Vi08} for the proof of this property and more.
 
\smallskip

Our main result regarding Kantorovitch's dual problem is

\begin{thm}\label{KantoPotenI}Let $\lambda$ and $\mu$ two probability measures on $\S$  satisfying the weak Aleksandrov condition $\eqref{def-WAA}$. Assume that $\lambda$ is absolutely continuous while the support of $\mu$ is not contained in a closed hemisphere. Then, denoting by $\pi_o$ an optimal plan in $\Gamma (\lambda,\mu)$, there exist a Kantorovitch potential $\varphi$ such that 
\begin{equation}\label{thm-gamm-ccon}
 \supp \pi_o \cap \{c<+\infty\} \; \subset \partial_c \varphi.
\end{equation}
\end{thm}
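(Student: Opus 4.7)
The plan is to construct $\varphi$ by a Rockafellar-type sup-over-chains formula adapted to the fact that $c$ takes the value $+\infty$. By standard optimal transport theory, optimality of $\pi_o$ forces $\supp \pi_o$ to be $c$-cyclically monotone in the sense of \eqref{eq-def-cmono}. On $\supp \pi_o \cap \{c<+\infty\}$ I would introduce the equivalence relation $(n,x) \sim (n',x')$ generated by finite chains along which every cost $c(n_{i+1},x_i)$ appearing in \eqref{eq-def-cmono} is finite. On each equivalence class $E$, fix a base point; the classical Rockafellar sup-over-chains formula, restricted to finite-cost chains, produces a function $\varphi_E$ that is well defined on the first projection of $E$ by $c$-cyclical monotonicity and is uniquely determined up to one additive constant per class.

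Next I would handle the gluing of the class-wise potentials via discretisation. Replace $\mu$ by a sequence of finitely supported approximations $\mu_N$ and let $\pi_N$ be associated discrete optimal plans. The equivalence classes on $\supp \pi_N \cap \{c<+\infty\}$ then become connected components of a finite bipartite graph, and the combinatorial machinery of Section 4---which encodes the weak Aleksandrov condition \eqref{eq-WAA} and the absolute continuity of $\lambda$ as mass-balance inequalities between a set $F$ and its enlargement $F_{\pi/2-2\alpha}$---produces quantitative estimates linking distinct components. These estimates make it possible to choose the undetermined additive constants so that the glued function $\varphi_N$ satisfies the admissibility constraint
$$\varphi_N(n) + \varphi_N^c(x) \le c(n,x) \quad \text{for all } (n,x) \in \S \times \S,$$
with equality on every point of $\supp \pi_N \cap \{c<+\infty\}$.

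Finally I would pass to the limit $N \to \infty$. Since $c$ is Lipschitz on relatively compact subsets of $\{c<+\infty\}$, the family $\varphi_N$ is uniformly Lipschitz on compact subsets avoiding the cut-locus; the quantitative angle $\alpha>0$ in \eqref{eq-WAA} supplies the uniform upper and lower bounds required for Arzel\`a--Ascoli, yielding a limit $\varphi$ along a subsequence. Lower semicontinuity of $c$ then gives $\varphi + \varphi^c \le c$ globally, weak convergence $\pi_N \rightharpoonup \pi_o$ combined with the equality case on $\supp \pi_N$ yields \eqref{thm-gamm-ccon}, and replacing $\varphi$ by $\varphi^{cc}$ (which can only enlarge $\partial_c \varphi$) realises $\varphi$ as a genuine Kantorovitch potential.

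The main obstacle is the second step: pinning down the relative heights of the class-wise potentials. Without any connectedness assumption on the finite-cost graph over $\supp \pi_o$, the classical theory offers no recipe for combining Rockafellar potentials across equivalence classes---this is exactly the novelty flagged in the introduction relative to \cite{Bertrand-GeomD, BP, BPP, arstein}. It is precisely here that both hypotheses of the theorem must be used in an essential way: absolute continuity of $\lambda$ guarantees that the push-forward and enlargement operations appearing in \eqref{eq-WAA} control $\lambda$-mass rather than just cardinalities, while the strict positivity of $\alpha$ in the weak Aleksandrov condition provides the uniform margin needed to propagate admissibility of the potential from one equivalence class to another.
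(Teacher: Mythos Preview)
Your outline diverges from the paper's proof in one essential way, and that divergence hides the main gap. The paper does \emph{not} discretise $\mu$ and pass to a limit; it builds $\varphi$ directly on $\Gamma=\supp\pi_o\cap\{c<+\infty\}$. The decomposition it uses is into the finitely many $\kappa$-chain connected components of the \emph{compact} set $\supp\mu$ (with $\kappa=\alpha/4$), not into the equivalence classes you describe, which a priori could be uncountably many. On each component $C_i$ a Rockafellar sup-over-chains formula defines $\psi_{C_i}$; the components are then glued by fixed inter-component costs $c_{ij}$ coming from concrete $c$-paths between chosen representatives, and one sets $\varphi=\psi^c$. No Arzel\`a--Ascoli and no approximating sequence $\mu_N$ appear.

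The reason your step 2 is a genuine gap is that the ``combinatorial machinery of Section 4'' is not a mass-balance argument applied to a discrete optimal plan $\pi_N$; it is a graph built simultaneously from $\pi_o$ \emph{and} a second, auxiliary plan $\pi^{\alpha}\in\Gamma(\lambda,\mu)$ whose support lies in $\{d\le\pi/2-\alpha\}$ (this plan exists precisely by the weak Aleksandrov condition, Theorem~\ref{thm-finite-cost-plan}). Vertices of the graph record where $\pi_o$ places mass; edges record where $\pi^{\alpha}$ does. Because $\pi_o$ and $\pi^{\alpha}$ share marginals, a mass-exchange argument (Lemma~\ref{lemma-cycle}) shows every edge sits on a cycle, and together with the $\kappa$-chain structure of $\supp\mu$ this yields uniformly bounded $c$-paths between any two points of $\Gamma$ lying over the same component (Proposition~\ref{prop-combina}). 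That uniform bound is exactly what pins down the relative heights of the $\psi_{C_i}$. Your proposal never produces an analogue of $\pi^{\alpha}$, so even for a finitely supported $\mu_N$ you have no mechanism to connect the equivalence classes or to prevent the additive constants from drifting; invoking ``Section~4'' does not fill this in, because the second plan is the whole point of Section~4. (Your limit step is also shaky---there is no reason the $\pi_N$ converge to the \emph{given} $\pi_o$---though once a genuine dual maximiser is in hand this particular issue dissolves.)
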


An easy consequence of this result -if we discard for now the question of the regularity of  $\phi$ and $\phi^c$-  is that equality holds in \eqref{eq-Kanto-ineq}:

$$ \int_{\S} \phi(n) \,d\lambda(n)+ \int_{\S}\phi^c(x) \, d\mu(x)= \int_{\S\times \S} (\phi(n)+\phi^c(x)) \,d\pi_o(n,x) \stackrel{\eqref{thm-gamm-ccon}}{=} \int_{\S\times \S} c(n,x) \, d\pi_o(n,x).$$

The above equality forces the measure $\mu$ to be the Gauss image measure of the underlying convex body $K \in \K_0$ determined by the pair $(\phi, \phi^c)$. Indeed, using Oliker's change of functions  $(\phi,\phi^c) \leftrightarrow (\rho_K,h_K)$, one infer the equality
$$\partial_c \phi = \{(n,x); h_K(n) =\rho_K(x) \langle n, x \rangle\}= \{(n,x); n \in \mathcal G_K\circ \vec{\rho}_K(x)\}.$$

For the sake of completeness, we reproduce the short proof of the fact that $\mu$ is the Gauss image of $K$ relative to $\lambda$ \cite{Bertrand-GeomD}.



For any Borel set $U \subset \S$, it holds
\begin{eqnarray*}
\mu(U) &=& \pi_o (\S\times U)\\
             &=& \pi_o(\S \times U\cap \{(n,x) \in (\S)^2; \phi(n) +\phi^c(x)= c(n,x) \})\\
             &=& \pi_o(\S \times U\cap \{(n,x) \in (\S)^2; n \in \mathcal{G}_K (\vec{\rho}_K(x))\})\\
             &=& \pi_o(  \mathcal{G}_K \circ \vec{\rho}_K (U) \times U \cap  \{(n,x); n \in  \mathcal{G}_K(\vec{\rho}_K(x))\})\\
             &=& \pi_o(  \mathcal{G}_K\circ \vec{\rho}_K(U) \times \S  \cap   \{(n,x); n \in  \mathcal{G}_K(\vec{\rho}_K(x))\})\\
            &=&  \pi_o( \mathcal{G}_K\circ \vec{\rho}_K(U) \times \S ) \\
             &=& \lambda ( \mathcal{G}_K\circ \vec{\rho}_K(U))
\end{eqnarray*}

where to get the equality in line 6 we use:
\begin{multline*}
   \mathcal{G}_K\circ \vec{\rho}_K(U) \times  U^c\, \cap  \{(n,x) \in (\S)^2; n \in \mathcal{G}_K(\vec{\rho}_K(x)))\} \subset \\
 \{(n,x)  \in (\S)^2 ; \exists\, x'\neq x, n \in  \mathcal{G}_K(\vec{\rho}_K(x))\cap   \mathcal{G}_K(\vec{\rho}_K(x'))\}
 \end{multline*}
 which yields
 \begin{multline*}
 \pi_o( \mathcal{G}_K\circ \vec{\rho}_K(U) \times  U^c \cap  \{(n,x) \in (\S)^2; n \in \mathcal{G}_K(\vec{\rho}_K(x)))\}) \leq \\
 \lambda( \{n  \in \S ; \exists\, x'\neq x, n \in  \mathcal{G}_K(\vec{\rho}_K(x))\cap  \mathcal{G}_K(\vec{\rho}_K(x'))\})=0,
 \end{multline*}
where the last equality hods since $\lambda$ is absolutely continuous with respect to the uniform measure $\sigma$ and the corresponding result for $\sigma$ is classical, see for instance \cite[Lemma 5.2]{bak}. 

\medskip

To sumarize, we have seen that the first statement in Theorem \ref{main} is a rather straightforward consequence of Theorem \ref{KantoPotenI}. In the rest of the paper, we shall  first prove that the mass transport problem is well-posed then buil a Kantorovitch potential as stated in Theorem \ref{KantoPotenI}. Our approach consists in discretising several probability measures at a scale compatible with the weak Aleksandrov condition.

In the next part we introduce suitable discretisations of the involved measures based on a partition of the sphere into Borel sets of small diameters. We then use this construction first to single out a transport plan with finite cost thus proving the mass transport problem is well-posed. Second, we study a graph induced by the discretised measures before proving Theorem \ref{KantoPotenI} by constructing a suitable Kantorovitch potential in Section 5. Our approach is a generalisation of methods introduced in \cite{BP, BPP}. Finally, we prove the second statement in Theorem \ref{main}.

\section{Discretisation of the measures}\label{section-discrete}


We first recall a basic covering lemma and then draw some consequences of this construction

\begin{lemma}\label{lem-cover}Let $\theta$ be a finite Borel measure on the unit sphere $\S$ endowed with the spherical distance $d$. For any $\kappa>0$, there exists a finite partition $(P_i)_{1\leq i\leq K}$ of $\S$ (depending on $\kappa$) such that for all $i$,  the interior $\stackrel{\circ}{P}_i$ of ${P}_i$ is nonempty, $diam (P_i) < \kappa$ and $\theta (\partial P_i)=0$. If we also assume that $\theta$ is absolutely continuous with respect to the uniform measure on $\S$, we can further require $\theta(P_i)$ to be a rational number.
\end{lemma}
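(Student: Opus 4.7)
The plan is to build the partition in two phases: first a geometric phase producing a partition meeting the diameter, nonempty-interior, and $\theta$-null-boundary conditions, and then an adjustment phase that turns each $\theta(P_i)$ into a rational number when $\theta$ is absolutely continuous with respect to the uniform measure $\sigma$.

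For the geometric phase, I use compactness of $\S$ to extract a finite cover by open geodesic balls $B_i=B(x_i,r_i)$ with $r_i<\kappa/4$. For each fixed center $x_i$, the concentric spheres $\partial B(x_i,r)$ are pairwise disjoint, so only countably many radii $r$ can satisfy $\theta(\partial B(x_i,r))>0$; by wiggling the $r_i$'s slightly while preserving the cover I arrange that $\theta(\partial\widetilde{B}_i)=0$ for every $i$, where $\widetilde{B}_i$ denotes the corresponding closed ball. Next, I iteratively discard any $\widetilde{B}_i$ contained in the union of the remaining balls, leaving an irredundant finite cover. Setting $P_1=\widetilde{B}_1$ and $P_i=\widetilde{B}_i\setminus(\widetilde{B}_1\cup\cdots\cup\widetilde{B}_{i-1})$ for $i\geq 2$ then defines a partition with $\mathrm{diam}(P_i)\leq \mathrm{diam}(\widetilde{B}_i)<\kappa/2$, and with $\partial P_i\subset \bigcup_{j\leq i}\partial \widetilde{B}_j$ being $\theta$-null. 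Irredundancy forces $\mathring P_i\neq\emptyset$: otherwise one would have $B_i\subset \bigcup_{j<i}\widetilde{B}_j$, hence $\widetilde{B}_i=\overline{B_i}\subset \bigcup_{j<i}\widetilde{B}_j$, contradicting irredundancy.

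For the adjustment phase, absolute continuity of $\theta$ with respect to $\sigma$ ensures that $\theta$ is non-atomic and charges no smooth hypersurface. I then proceed cell by cell along a spanning tree of the adjacency graph of the partition. To adjust a neighboring pair $P_i,P_j$, I pick an interior point $x\in\mathring P_j$ close to $\overline{P_i}\cap\overline{P_j}$ and introduce the deformations $P_i(r)=P_i\cup(\widetilde{B}(x,r)\cap P_j)$ and $P_j(r)=P_j\setminus \widetilde{B}(x,r)$. By absolute continuity, $r\mapsto \theta(P_i(r))$ is continuous and nondecreasing from $\theta(P_i)$ to $\theta(P_i)+\theta(P_j)$, so the intermediate value theorem lets me hit any rational value close enough to $\theta(P_i)$; a generic choice of $r$ further guarantees $\theta(\partial \widetilde{B}(x,r))=0$, preserving the null-boundary property. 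Iterating this procedure fixes $\theta(P_i)$ to a prescribed rational for all but one cell $P_K$; the measure of $P_K$ is then $\theta(\S)$ minus a rational, which is itself rational in the intended application where $\theta$ is a probability measure.

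The main obstacle is controlling diameters throughout the adjustment. Because $\theta$ has no atoms, the target rationals can be chosen arbitrarily close to the original $\theta(P_i)$'s, so the transfer radii are arbitrarily small; the initial diameter budget below $\kappa/2$ thus leaves ample room to stay strictly below $\kappa$ after the finitely many modifications. Interiors remain nonempty provided each transfer radius is small enough not to swallow its source cell, which is again guaranteed by the smallness of the transfers.
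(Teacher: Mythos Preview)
Your Phase 1 is correct and elegant; it differs from the paper's approach, which builds the partition by induction on the dimension using spherical coordinates (annular bands between spheres $\{d(\cdot,N)=R_i\}$, then an application of the inductive hypothesis to the lower-dimensional projection of each band).

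Phase 2, however, has a genuine gap. Your key inference ``the target rationals can be chosen arbitrarily close to the original $\theta(P_i)$'s, so the transfer radii are arbitrarily small'' is not justified: it would require the center $x$ to lie in $\supp\theta$, for otherwise $\theta\big(\tilde B(x,r)\cap P_j\big)$ may vanish identically for all $r$ below a fixed positive threshold, and transferring any positive mass then forces a radius of at least that threshold. You place $x$ near $\overline{P_i}\cap\overline{P_j}$, but nothing guarantees that $\supp\theta\cap P_j$ approaches that set---indeed you do not exclude $\theta(P_j)=0$, in which case $r\mapsto\theta(P_i(r))$ is constant and no rational other than $\theta(P_i)$ itself is ever hit. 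Once the small-radius claim fails, your argument for preserving nonempty interiors of the shrinking source cells collapses, and if you switch to transferring in the opposite direction to get around the zero-mass obstacle, the cumulative diameter growth along the spanning tree (parent cells absorbing pieces from several children located anywhere in those children) is no longer controlled by your $\kappa/2$ budget.

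The paper sidesteps all of this: in its coordinate-grid construction each cell boundary is the level set of a single real parameter, so the rational-mass adjustment is a one-variable intermediate-value argument performed \emph{in the course of} building the partition, with no need to route mass between already-constructed cells.
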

A very closed result is proved in the appendix of \cite{Bertrand-GeomD}. In the same paper, and building on the above covering lemma, it is shown that if the uniform probability measure and $\mu$ are Aleksandrov related then there exists a plan  $\pi^{\alpha} \in \Gamma(\lambda,\mu)$ and $M>0$ such that $c \leq M<+\infty$ everywhere on the support of $\pi^{\alpha}$. As noticed in \cite[Remark 4.9]{Bertrand-GeomD}, the same proof applies to $\lambda$ and $\mu$ as well; moreover the proof precisely requires the \emph{weak} Aleksandrov condition to hold. We refer to the appendix for a proof of the above lemma and the following theorem.

\begin{thm}\label{thm-finite-cost-plan} Let  $\lambda$  and $\mu$ be two probability measures on $\S$ satisfying the weak Aleksandrov condition \eqref{eq-WAA}.  Assume that $\lambda$ is absolutely continuous while the support of $\mu$ is not contained in a closed hemisphere. Then there exists a transport plan $\pi^{\alpha} \in \Gamma(\lambda,\mu)$ such that
$$ \supp \pi^{\alpha} \subset \{(n,x) \in \S \times \S;  \langle n , x \rangle \geq \cos (\pi/2 -\alpha) \}.$$
In particular, 
$$\int c \, d\pi^{\alpha} < +\infty.$$
\end{thm}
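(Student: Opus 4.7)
The plan is to discretise both measures at a scale much finer than the angular slack $\alpha$, construct a finitely supported transport plan by a Hall-type marriage argument on the resulting atoms, and pass to the limit. First, I would apply Lemma~\ref{lem-cover} to the total measure $\theta = \lambda + \mu$ with parameter $\kappa$ chosen so that $2\kappa < \alpha$, yielding a partition $(P_i)_{1 \leq i \leq K}$ of $\S$ with $\di P_i < \kappa$ and $\theta(\partial P_i) = 0$. Picking representatives $y_i \in \stackrel{\circ}{P}_i$, I form the finitely supported probability measures $\lambda_\kappa := \sum_i \lambda(P_i)\delta_{y_i}$ and $\mu_\kappa := \sum_j \mu(P_j)\delta_{y_j}$, which converge weakly to $\lambda$ and $\mu$ as $\kappa \to 0$.

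Next, I would set up a bipartite graph on $\{1,\dots,K\}$ by declaring $(i,j)$ to be an edge whenever $P_i \cap (P_j)_{\pi/2-2\alpha} \neq \emptyset$, i.e.\ there exist $n \in P_i$, $x \in P_j$ with $d(n, x) < \pi/2 - 2\alpha$. Any mass routed along such an edge travels over spherical distance at most $\pi/2 - 2\alpha + 2\kappa < \pi/2 - \alpha$, which is precisely the support condition sought (and the set $\{d \leq \pi/2 - \alpha\}$ is closed, hence preserved in the weak limit). By the finite analogue of Kantorovich's duality --- equivalently, Hall's marriage theorem for measures --- a plan $\pi_\kappa \in \Gamma(\lambda_\kappa,\mu_\kappa)$ supported on edges exists if and only if $\sum_{j \in J} \mu(P_j) \leq \sum_{i \in N(J)} \lambda(P_i)$ for every $J \subseteq \{1,\dots,K\}$, where $N(J)$ denotes the neighbourhood of $J$. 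Setting $F_J := \overline{\bigcup_{j\in J}P_j}$, one checks that $(F_J)_{\pi/2-2\alpha} \subseteq \bigcup_{i \in N(J)} P_i$, so when $F_J$ lies in a closed hemisphere the weak Aleksandrov condition \eqref{eq-WAA} directly yields the desired inequality.

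The main obstacle is the case in which $F_J$ is not contained in any closed hemisphere: \eqref{eq-WAA} is then silent. My plan is to split such a $J$ into sub-collections whose $P$-unions are each hemispherically contained (always feasible since each $P_j$ has diameter less than $\kappa \ll 1$), apply \eqref{eq-WAA} piecewise, and combine the estimates while carefully accounting for the overlaps of the $(\pi/2-2\alpha)$-neighbourhoods. A key qualitative fact to exploit is that when $F_J$ is not contained in any closed hemisphere its $(\pi/2-2\alpha)$-neighbourhood covers a substantial portion of $\S$ --- in favourable configurations, all of $\S$, in which case the Hall inequality becomes trivial via $\mu(F_J) \leq 1 = \lambda(\S)$. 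This step mirrors the scheme carried out in the appendix of~\cite{Bertrand-GeomD}, and it is precisely where the \emph{weak} form of Aleksandrov's condition must do all the work.

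Finally, once $\pi_\kappa$ is produced for each admissible $\kappa$, a weak cluster point $\pi^\alpha$ as $\kappa \to 0$ is extracted by Banach--Alaoglu. Since the marginal maps are continuous for the weak topology, $\pi^\alpha \in \Gamma(\lambda,\mu)$; the closed support condition $\{\langle n,x\rangle \geq \cos(\pi/2-\alpha)\}$ passes to the limit, and on this set $c \leq -\log\sin\alpha$, so that $\int c\, d\pi^\alpha < +\infty$.
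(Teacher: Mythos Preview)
Your strategy---discretise at scale $\kappa\ll\alpha$, verify a Hall-type matching condition from the weak Aleksandrov inequality, then extract a weak cluster point by Banach--Alaoglu on the closed set $\{d(n,x)\le\pi/2-\alpha\}$---is exactly the route the paper takes in its appendix. The one genuine difference is in how the discretisation is set up. The paper does \emph{not} discretise $\mu$ directly: it first mollifies $\mu$ by a radial kernel $\rho_\varepsilon$ (with $\varepsilon<\alpha/4$) so that $\hat\mu=\mu*\rho_\varepsilon$ is absolutely continuous and still satisfies \eqref{eq-WAA} with slack $7\alpha/4$; only then does it invoke Lemma~\ref{lem-cover} separately on $\hat\mu$ and on $\lambda$ to obtain partitions whose cell-masses are \emph{rational}. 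Clearing denominators reduces the problem to the classical bijective Marriage Lemma on $M$ equal atoms on each side, and the plan produced lies in $\Gamma(\lambda,\mu_e)$ rather than in $\Gamma(\lambda_\kappa,\mu_\kappa)$; one then lets $\varepsilon\to 0$. Your use of the fractional (measure-valued) Hall criterion instead is a legitimate and slightly cleaner shortcut, at the cost of having to argue weak convergence of both marginals rather than only the second.

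On the ``$F_J$ not in a closed hemisphere'' obstacle you flag: the paper's appendix does not isolate this case at all---it records the discretised inequality for sets in a hemisphere and then applies it to \emph{every} index set $I$ in the Hall verification without further comment, deferring to \cite{Bertrand-GeomD} for details. Your proposed splitting argument, as written, does not close the gap: decomposing $J=J_1\sqcup J_2$ into hemispherical pieces yields $\mu(F_J)\le\lambda\big((F_{J_1})_{\pi/2-2\alpha}\big)+\lambda\big((F_{J_2})_{\pi/2-2\alpha}\big)$, and the right-hand side over-counts $\lambda$ on the overlap and need not be bounded by $\lambda\big((F_J)_{\pi/2-2\alpha}\big)$. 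So on this specific point your proposal is no more---but also no less---complete than the paper's own argument.
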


\begin{cor}\label{cor-optimalPlan} Under the above assumptions the optimal mass transport problem for the measures $\lambda$ and $\mu$ relative to the cost function $c$ has a solution. Let us denote by $\pi_o$ an optimal transport plan.

\end{cor}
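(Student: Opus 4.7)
The plan is to apply the direct method in the calculus of variations, combining Theorem~\ref{thm-finite-cost-plan} with the standing properties of $\Gamma(\lambda,\mu)$ already recalled in Section~\ref{CP&OMT}. Concretely, one must exhibit a minimizing sequence in $\Gamma(\lambda,\mu)$, extract a weak limit, and verify that this limit realises the infimum in \eqref{massdefi}.

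First, I would note that $\Gamma(\lambda,\mu)$ is non-empty (it contains the product measure $\lambda \otimes \mu$) and, as already observed in the paragraph following \eqref{massdefi}, it is compact for the topology of weak convergence of probability measures: this is a standard consequence of Prokhorov's theorem, since the marginals of any $\pi \in \Gamma(\lambda,\mu)$ are fixed probability measures on the compact space $\S$, so the family is automatically tight, and the constraint \eqref{planmargi} is preserved under weak limits. Next, by Theorem~\ref{thm-finite-cost-plan} there is a plan $\pi^\alpha \in \Gamma(\lambda,\mu)$ with $\int c\, d\pi^\alpha < +\infty$, so the infimum in \eqref{massdefi} is a finite non-negative real number and any minimizing sequence $(\pi_n)$ exists.

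The only step that requires a brief comment is the lower semicontinuity of the functional $\pi \mapsto \int c\, d\pi$, since the cost $c$ is not real-valued but takes the value $+\infty$ on the antipodal region. However, $c:\S\times\S \to [0,+\infty]$ is continuous in this extended sense, hence lower semicontinuous and non-negative; writing $c = \sup_{k\in\N}(c \wedge k)$ with each truncation $c\wedge k$ bounded and continuous, monotone convergence gives
\begin{equation*}
\int c\, d\pi \;=\; \sup_{k} \int (c\wedge k)\, d\pi,
\end{equation*}
which is an increasing supremum of weakly continuous functionals and hence weakly lower semicontinuous on $\Gamma(\lambda,\mu)$.

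Putting these ingredients together, if $(\pi_n) \subset \Gamma(\lambda,\mu)$ is a minimizing sequence for \eqref{massdefi}, compactness yields a subsequence converging weakly to some $\pi_o \in \Gamma(\lambda,\mu)$, and lower semicontinuity gives $\int c\, d\pi_o \le \liminf_n \int c\, d\pi_n = \inf_{\Gamma(\lambda,\mu)} \int c\, d\pi$, so $\pi_o$ is optimal. I do not anticipate any genuine obstacle here: the one subtlety is handling the $+\infty$ values of $c$, but this is handled once and for all by the truncation argument above, and the rest is a direct application of general facts that have already been set up in the paper.
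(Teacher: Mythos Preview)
Your proof is correct and follows exactly the approach the paper intends: the paper's own proof simply cites continuity of $c$ and compactness of $\Gamma(\lambda,\mu)$ as standard (referring to \cite{Vi03}), and your argument spells out precisely that direct-method reasoning, including the truncation step needed because $c$ takes the value $+\infty$. There is no substantive difference between the two.
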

\begin{proof}
This is a very standard consequence of the continuity of the cost function and the compactness of $\Gamma(\lambda,\mu)$, see for instance \cite{Vi03} for a proof.
\end{proof}

With those tools at our disposal, we can now define a graph involving the transport plans $\pi^{\alpha}$ and $\pi_o$.

\smallskip

\noindent {\bf Coverings of $\supp \mu$ and $\supp \nu$}\\

According to Lemma \ref{lem-cover} applied to $\supp \mu$ and $\supp \nu$,  there exist  two finite collections of Borel subsets of $\S$ with nonempty interiors $(P_i)_{1\leq i \leq l} $ and $(Q_j)_{1 \leq j \leq p} $  such that

\begin{equation}\label{eqn-cover-supp}
\lambda (P_i)>0, \mu(Q_j)>0,\;\lambda (\partial P_i)= \mu(\partial Q_j)=0,\; \di (P_i), \di (Q_j) < \alpha/8,  
\end{equation}
for all   $i \in  \{1,\cdots  ,l\}, j \in \{1,\cdots  ,p\}.$


\smallskip

We also fix 
\begin{equation}\label{eq-def-zi-wj}
z_i \mbox{ a point in } \stackrel{\circ}{P_i}\cap \, \supp \mu \;\mbox{ and }\;w_j \mbox{ a point in } \stackrel{\circ}{Q_j} \cap\, \supp \nu.
\end{equation}

 We then introduce the discretised measures $\lambda^d$ and $\mu^d$ relative to the above covers as follows:

$$\lambda^d := \sum_{1\leq i\leq l} \lambda(P_i)\,  \delta_{z_i} \mbox { and } \mu^d := \sum_{1\leq j\leq p} \mu(Q_j)\,  \delta_{w_j}.$$

We proceed similarly for transport plans. We define the following discrete plans

$$\pi_o^d:= \sum_{i,j} \pi_o(P_i\times Q_j)\, \delta_{(z_i,w_j)} \;\; \mbox{ and } \pi^{\alpha, d}:= \sum_{i,j} \pi^{\alpha}(P_i\times Q_j)\, \delta_{(z_i,w_j)}.$$

\medskip

\noindent {\bf Graph structure on the product covering}\\

Let us now define the following \emph{finite graph}

$$ \G:=\{(P_i,Q_j); \pi_o(P_i\times Q_j)>0\} \subset  \{(P_i, Q_j); 1\leq i\leq l, 1\leq j \leq p\}.$$

For simplicity, we shall sometimes write $(i,j) \in \G$ instead of $(P_i,Q_j) \in \G$.

The graph $\G$ is equipped with the following set of \emph{primal oriented edges}
$$\E=\{\{(i,j), (u,v)\}; \pi^{\alpha}(P_u\times Q_j)>0\}.$$

We call \emph{length} of a path in $(\G,\E)$ the number of edges it uses.

\begin{remark}
The graph $\G$ has many edges since, given $\{(i,j), (u,v)\} \in \E$, we get that 
$\{(o,j), (u,s)\} \in \E$ for any $o, s$, such that $(o,j) \in \G$ and $(u,s)\in \G$.

\end{remark}
We are interested in the cycles
 of $\G$, namely the finite collections $(i_1,j_1), \cdots ,(i_q,j_q) \in \G$ where $q$ is an arbitray positive integer, and such that $\{\{(i_s,j_s) ,(i_{s+1}, j_{s+1})\} \} \in \E$ for all $s \in \{1, \cdots, q\}$ (where $(i_1,j_1)=(i_{q+1},j_{q+1})$).

\begin{lemma}\label{lemma-cycle} 
Each edge $e \in \E$ belongs to a least one cycle $(i_1,j_1), \cdots ,(i_q,j_q) \in \G$, say $e = \{(i_1,j_1) ,(i_2 ,j_2)\}$. Moreover, one can assume that the vertices of the cycle are all distinct. 
\end{lemma}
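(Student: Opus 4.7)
The plan is to exhibit the desired cycle via an auxiliary directed graph. I introduce a directed multigraph $D$ on the vertex set $\{Q_1,\dots,Q_p\}$ whose arrows are labelled pairs $(Q_j\to Q_v,\,P_u)$ subject to the two constraints $\pi^{\alpha}(P_u\times Q_j)>0$ and $\pi_o(P_u\times Q_v)>0$. A cycle $(i_1,j_1),\dots,(i_q,j_q)$ in $(\G,\E)$ then corresponds bijectively to a directed cycle $Q_{j_1}\to Q_{j_2}\to\cdots\to Q_{j_q}\to Q_{j_1}$ in $D$ in which the arrow $Q_{j_s}\to Q_{j_{s+1}}$ carries the label $P_{i_{s+1}}$ (indices modulo $q$). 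Under this translation, the given edge $e$ becomes the arrow $Q_{j_1}\to Q_{j_2}$ labelled $P_{i_2}$, and producing the sought cycle reduces to producing a directed path in $D$ from $Q_{j_2}$ to some $Q_v$ with $\pi^{\alpha}(P_{i_1}\times Q_v)>0$: appending to such a path the arrow $Q_v\to Q_{j_1}$ labelled $P_{i_1}$ (which is valid since $\pi_o(P_{i_1}\times Q_{j_1})>0$ because $(i_1,j_1)\in\G$) closes the loop.

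Let $R\subset\{Q_1,\dots,Q_p\}$ denote the set of vertices reachable from $Q_{j_2}$ in $D$, including $Q_{j_2}$ itself, and let $T=\{P_u:\exists\,Q_k\in R,\ \pi^{\alpha}(P_u\times Q_k)>0\}$ be its $\pi^{\alpha}$-neighbourhood. By construction of $D$ and closure of $R$ under reachability, every $P\in T$ has all its $\pi_o$-neighbours inside $R$. Combining this observation with the marginal equalities $\sum_{Q}\pi_o(P\times Q)=\lambda(P)=\sum_{Q}\pi^{\alpha}(P\times Q)$ and $\sum_{P}\pi_o(P\times Q)=\mu(Q)=\sum_{P}\pi^{\alpha}(P\times Q)$ yields
\[\mu(R)=\sum_{Q\in R}\sum_{P\in T}\pi^{\alpha}(P\times Q)\le \sum_{P\in T}\lambda(P)=\lambda(T)=\sum_{P\in T}\sum_{Q\in R}\pi_o(P\times Q)\le \mu(R),\]
so all inequalities are equalities. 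Unfolding the two equality cases delivers the following two properties: (A) every $\pi^{\alpha}$-edge issued from $T$ ends in $R$; (B) every $\pi_o$-edge ending in $R$ issues from $T$.

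I then chase the data of $e$ through (A) and (B). Since $\pi_o(P_{i_2}\times Q_{j_2})>0$ and $Q_{j_2}\in R$, property (B) gives $P_{i_2}\in T$. Since $\pi^{\alpha}(P_{i_2}\times Q_{j_1})>0$ (this is the edge $e$) and $P_{i_2}\in T$, property (A) gives $Q_{j_1}\in R$. Finally $\pi_o(P_{i_1}\times Q_{j_1})>0$ together with $Q_{j_1}\in R$ forces $P_{i_1}\in T$ by (B), which produces the desired $Q_v\in R$ with $\pi^{\alpha}(P_{i_1}\times Q_v)>0$ and thus closes the cycle through $e$. For the distinctness clause, any closed walk revisiting a vertex can be split at the repetition into two strictly shorter closed walks, exactly one of which still contains $e$; iterating this surgery terminates and yields a simple cycle through $e$. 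The main technical point I expect is the bookkeeping in the correspondence $\G\leftrightarrow D$ and in verifying that each half of the mass-balance chain is exploited in the correct direction; once the translation is in place, the substance of the argument is a short flow chase on a finite bipartite graph whose two edge colours have matching vertex weights inherited from the shared marginals of $\pi_o$ and $\pi^{\alpha}$.
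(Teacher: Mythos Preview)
Your argument is correct and takes a genuinely different route from the paper. The paper proceeds by a global cycle-peeling scheme: it picks an arbitrary cycle in $(\G,\E)$, subtracts from both $\pi_o^d$ and $\pi^{\alpha,d}$ the minimum mass along that cycle (which preserves the common marginals while killing at least one vertex or edge), and iterates; since the reduced graph eventually becomes empty and each step only removes edges lying on a cycle of the current graph, every edge of the original graph must lie on some cycle. Your argument, by contrast, is a single-shot reachability computation in the auxiliary digraph $D$: the equality chain $\mu(R)\le\lambda(T)\le\mu(R)$ forces the reachable set $R$ to be closed under both $\pi^{\alpha}$- and $\pi_o$-neighbourhoods in the appropriate directions, and a short chase then traps $P_{i_1}$ in $T$ and closes the cycle through $e$ directly. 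In effect the paper proves the Euler-type statement ``the graph decomposes into cycles'' and reads off the lemma as a corollary, while you prove the Hall/flow-type statement ``the forward-reachable set is also backward-closed'' and read off the lemma from that. Your approach is slightly more direct for the specific edge $e$ and avoids the bookkeeping of the iterative reduction; the paper's approach yields the stronger byproduct of an explicit cycle decomposition of the whole graph, though that extra information is not used elsewhere.
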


\begin{remark}\label{remark-cycle}
Since any vertex of $\G$ belongs to an edge, the above lemma also implies that any vertex of $\G$ belongs to a cycle.
\end{remark}

\begin{proof}
By definition of an edge, $\pi^{\alpha}( P_{i_2}\times Q_{j_1})>0$ and $\pi_o(P_{i_2}\times Q_{j_2})>0$. We have the following alternative, either $ \pi^{\alpha}((\R^n \setminus P_{i_1})   \times Q_{j_2})=0$ (which implies that $(i_1,j_1), (i_2,j_2)$ is a cycle)  or we can repeat the argument with $\{(i_2,j_2), (i_3,j_3)\}\neq e$ since there exists $P_{i_3}\neq P_{i_1}$ such that $\pi^{\alpha}(P_{i_3}\times  Q_{j_2})>0$ combined with the fact that $\pi_o \in \Gamma(\mu,\nu)$ implies the existence of $ Q_{j_3}$ such that $(P_{i_3}, Q_{j_3}) \in \G$. In each case, after finitely many steps we obtain a cycle; however we cannot infer that $e$ belongs to it.  Thus, let us further study the graph $(\G,\E)$. Let  $(i_1,j_1), \cdots, (i_k,j_k) \in \G$ be an arbitrary cycle made of distinct points and
$$ m:= \min \left(\min_{1\leq u \leq k} \{ \pi_o(P_{i_u}\times Q_{j_u})\}\; , \min_{1\leq u \leq k} \{ \pi^{\alpha}(P_{i_{u+1}}\times Q_{j_u})\}\right)>0.$$

Consider $\pi_1^d=\pi_o^d -\sum_{u=1}^k m  \,  \delta_{(z_{i_u},w_{j_u})}$ and $\pi_1^{\alpha , d}= \pi^{\alpha , d} -\sum_{u=1}^k m \,  \delta_{(z_{i_{u+1}},w_{j_u})}$, note that $\pi_1^d$ and $\pi_1^{\alpha, d}$ have the same marginals. Moreover, by definition of $m$, 
 $$\sharp \,\supp \pi_1^d + \sharp  \, \supp \pi_1^{\alpha , d} \leq \sharp \, \supp \pi_o^d + \sharp \, \supp\pi^{\alpha, d}-1 $$ (where $\sharp$ stands for the cardinal of a set). Now define the reduced graph $\G_1$ similarly to what we did for $\G$ but using $\pi_1^{d}$ and $\pi_1^{\alpha, d}$ instead of $ \pi_o^d$ and $\pi^{\alpha , d}$. By construction, $\G_1 \subset \G$, $\E_1 \subset \E$, and, as explained above, we have $\sharp \, \G_1 + \sharp \, \E_1 \leq \sharp \, \G + \sharp  \, \E -1$. Since $\sharp \G + \sharp \E  \leq lp+ (lp)^2$, after repeating the construction at most $lp+(lp)^2$ times, we get an empty reduced graph. This precisely means that any edge of graph $\G$ belongs to a cycle.

\end{proof}

We call \emph{bunch of cycles} related to $(P_0,Q_0)$, and denote by $\B_{(P_0,Q_0)}$, the set of vertices of $G$ sharing a cycle with $(P_0,Q_0) \in G$, namely the set of vertices $(P,Q)$ such that there exist a cycle $C_y$ (depending on $(P,Q)$) with $(P,Q)$ and $(P_0,Q_0)$ belong to $C_y$. The set of bunches of cycles induces a partition of the graph $\G$, and, in general, more than one bunch of cycles is needed to cover $\G$.

Being in the same bunch of cycles induces an equivalence relation on $\G$. In the next definition, we enlarge the set of edges to be able to connect more vertices at the price of breaking the symmetry of the previous relation.

We now equipped $\G$ with an \emph{enlarged set of oriented edges} $\E'$ defined by
\begin{equation}\label{eq-EnlEdges}
\E'=\{\{(i,j), (u,v)\}; d(z_u, w_j)< \pi/2-\alpha/4\}.
\end{equation}

\begin{remark}
The fact that $\supp \pi^{\alpha} $ is made of pairs of points at distance at most $\pi/2 - \alpha$ together with the upper bound on the diameter of the $(P_i)$'s and $(Q_)j$'s readily imply the inclusion: $\E \subset \E'$. 
\end{remark}

\medskip

\noindent{\bf Chains between points and connected subgraph}\\

Let us start with a weak notion of connectedness.

\begin{defi}[$\kappa$-chainable space]
Let $\kappa>0$ and $(X,d)$ be a metric space. An ordered set $\{x_1, \cdots, x_p\} $ in $X$ such that $d(x_i,x_{i+1}) < \kappa$ for any $i=1, \cdots, p-1$ is said to be a $\kappa$-chain (of length $p$) from $x_1$ to $x_p$. The relation $x \underset{\kappa}{\sim} y$ iff there exists a finite chain  from $x$ to $y$ induces an equivalence relation on $X$. By analogy, we call $\kappa$-chain connected component an equivalence class of $\underset{\kappa}{\sim}$.    
\end{defi}

It is an easy exercise to show that a connected space is  $\kappa$-connected for any $\kappa>0$. For us, the main adavantage of this notion is given by the following simple lemma.

\begin{lemma}
Let $(X,d)$ be a \emph{compact} metric space and $\kappa>0$. Then $X$ can be decomposed into finitely many $\kappa$-chain connected components.

\end{lemma}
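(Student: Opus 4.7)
The plan is to exploit compactness through a finite cover by balls of radius $\kappa/2$, and then observe that every point is trivially $\kappa$-chained to a center of the ball containing it. More precisely, I would proceed as follows.

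First, since $X$ is compact, the open cover $\{B(x,\kappa/2)\}_{x \in X}$ admits a finite subcover: there exist $x_1, \ldots, x_N \in X$ such that $X = \bigcup_{i=1}^N B(x_i,\kappa/2)$. Next, I would observe that for any $y \in X$, there exists some index $i$ with $y \in B(x_i, \kappa/2)$, so $d(y, x_i) < \kappa/2 < \kappa$. Hence $\{y, x_i\}$ is a $\kappa$-chain of length $2$, which yields $y \underset{\kappa}{\sim} x_i$.

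Consequently, every equivalence class of $\underset{\kappa}{\sim}$ contains at least one of the points $x_1, \ldots, x_N$, so the total number of equivalence classes is bounded above by $N$, and $X$ decomposes into at most $N$ many $\kappa$-chain connected components.

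There is no real obstacle here: the only point worth noting is the careful use of the definition of a $\kappa$-chain (which allows chains of length $2$, so $d(y,z) < \kappa$ immediately implies $y \underset{\kappa}{\sim} z$), and the fact that the radius $\kappa/2$ in the cover is chosen precisely to ensure that points in the same ball are within distance $\kappa$ of its center.
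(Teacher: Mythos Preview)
Your proof is correct. It differs from the paper's argument, which is even shorter: the paper simply observes that each $\kappa$-chain connected component is both open and closed in $X$ (open because $B(y,\kappa)$ is contained in the class of $y$; closed because its complement is a union of open classes), and then invokes compactness to conclude that the resulting open cover by equivalence classes must be finite.

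Your approach is more explicit and constructive: it produces an upper bound $N$ on the number of components in terms of any finite $\kappa/2$-net, without needing to verify that the classes are clopen. The paper's argument is more conceptual and avoids choosing a cover, but yields no quantitative bound. Both are perfectly adequate for this elementary lemma.
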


\begin{proof}

 Each $\kappa$-chain connected component is both an open and a closed subset of $X$. Thus, the compactness of $X$ implies the existence of a finite subcover. 

\end{proof}

In what follows, we set
\begin{equation}\label{eq-defKappa}
 {\kappa = \alpha/4.}
\end{equation}

In the rest of this part, we let  $C$ be \emph{a $\kappa$-chain connected component of $\supp \nu$}. We also define the subgraph

\begin{equation}\label{eq-Gc}
\G_C :=\{(P,Q) \in \G; Q \cap C\neq \emptyset\}
\end{equation}
equipped with the set of edges coming from $\E'$.

\begin{remark}\label{rem-path-in-Gc}
Of course, there are a priori edges in $\E'$ connecting points not in $\G_C$. In the rest of the paper when we consider \emph{a path in $\E'$ connecting two points in $\G_C$} we do not intend to restrict to edges in $\E'$ connecting points in $\G_C$, on the contrary any edge in $\E'$ can be part of such a path.
\end{remark}

Our aim is now to prove

\begin{prop}\label{prop-Gc-Conn}
The graph $(\G_C,\E')$ is connected, meaning that any two points of $\G_C$ can be connected by a path in $\E'$. We can further assume the path length is at most $\sharp \G$.
\end{prop}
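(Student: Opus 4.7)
The plan is to lift a $\kappa$-chain between two points of $C$ to a path in $(\G_C,\E')$, interleaving chain-vertices with bridging vertices manufactured from the auxiliary plan $\pi^{\alpha}$ of Theorem~\ref{thm-finite-cost-plan}.

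Given $A=(P_a, Q_{j_0})$ and $B=(P_b, Q_{j_n})$ in $\G_C$, choose $x_0\in Q_{j_0}\cap C$, $x_n\in Q_{j_n}\cap C$, and a $\kappa$-chain $x_0,\ldots,x_n$ inside $C$. Each $x_s$ lies in a unique partition element $Q_{j_s}$ with $\mu(Q_{j_s})>0$; after pruning we may assume the $j_s$ are pairwise distinct. For each $s$, the second marginal of $\pi^{\alpha}$ yields an index $i^*_s$ with $\pi^{\alpha}(P_{i^*_s}\times Q_{j_s})>0$, and combining Theorem~\ref{thm-finite-cost-plan} with the diameter bounds of \eqref{eqn-cover-supp} gives
$$d(z_{i^*_s}, w_{j_s}) \;<\; \frac{\alpha}{8} + \left(\frac{\pi}{2}-\alpha\right) + \frac{\alpha}{8} \;=\; \frac{\pi}{2}-\frac{3\alpha}{4}.$$
Since $\lambda(P_{i^*_s})>0$, pick $v_s$ with $(i^*_s, v_s)\in\G$; likewise, since $\mu(Q_{j_s})>0$, pick $a_s$ with $(a_s, j_s)\in\G$ for $1\leq s\leq n-1$, setting $a_0=a$ and $a_n=b$.

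The proposed path alternates between the two types of vertices:
$$A=(a_0, j_0)\,\text{---}\,(i^*_0, v_0)\,\text{---}\,(a_1, j_1)\,\text{---}\,(i^*_1, v_1)\,\text{---}\,\cdots\,\text{---}\,(i^*_{n-1}, v_{n-1})\,\text{---}\,(a_n, j_n)=B.$$
The ``odd'' edges $\{(a_s, j_s),(i^*_s, v_s)\}$ lie in $\E\subset\E'$ directly, by choice of $i^*_s$. For each ``even'' edge $\{(i^*_s, v_s),(a_{s+1}, j_{s+1})\}$, the relevant $\E'$-condition is $d(z_{i^*_s}, w_{j_{s+1}})<\pi/2-\alpha/4$, which follows from the triangle inequality together with
$$d(w_{j_s}, w_{j_{s+1}}) \;\leq\; d(w_{j_s},x_s) + d(x_s,x_{s+1}) + d(x_{s+1}, w_{j_{s+1}}) \;<\; \frac{\alpha}{8}+\frac{\alpha}{4}+\frac{\alpha}{8}=\frac{\alpha}{2},$$
whence $d(z_{i^*_s}, w_{j_{s+1}})<(\pi/2-3\alpha/4)+\alpha/2=\pi/2-\alpha/4$.

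Every vertex of the path belongs to $\G$, so pruning any repeated vertex produces a simple path of length at most $\sharp\G$. The main subtlety lies in the interaction of the two transport plans: $\pi_o$ only supplies the vertices that must be joined, whereas it is $\pi^{\alpha}$ that furnishes the bridging indices $i^*_s$ with the sharp distance bound $\pi/2-3\alpha/4$; this sharper bound is exactly what absorbs the $\alpha/2$ slack generated by one step of the $\kappa$-chain. The calibration $\kappa=\alpha/4$, the diameter bound $\alpha/8$ on the cover pieces, and the angular margin $\alpha$ provided by Theorem~\ref{thm-finite-cost-plan} are thus precisely tuned to make this lifting work.
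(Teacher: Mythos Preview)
There is a genuine gap in the handling of the ``even'' edges, and it comes from the \emph{orientation} of $\E'$. By \eqref{eq-EnlEdges}, the oriented edge $\{(i,j),(u,v)\}$ lies in $\E'$ precisely when $d(z_u,w_j)<\pi/2-\alpha/4$: the condition couples the first coordinate of the \emph{target} with the second coordinate of the \emph{source}. For your even edge $\{(i^*_s,v_s),(a_{s+1},j_{s+1})\}$ this reads $d(z_{a_{s+1}},w_{v_s})<\pi/2-\alpha/4$, and you have no control over either $a_{s+1}$ or $v_s$ --- both were chosen merely to land in $\G$ and carry no metric information. The inequality you actually establish, $d(z_{i^*_s},w_{j_{s+1}})<\pi/2-\alpha/4$, is the condition for the \emph{reversed} edge $\{(a_{s+1},j_{s+1}),(i^*_s,v_s)\}$. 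Consequently your sequence alternates forward and backward oriented edges and is not a directed $\E'$-path from $A$ to $B$. This matters: the downstream application in Proposition~\ref{prop-combina} explicitly uses $d(z_{i_{s+1}},w_{j_s})<\pi/2-\alpha/4$ along the path, i.e.\ requires every edge to point forward.

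The paper closes exactly this gap via Lemma~\ref{lemma-cycle} (and Remark~\ref{remark-cycle}): every vertex of $\G$ lies on a directed $\E$-cycle. To connect a chain-vertex $(P_{\theta(s)},Q_{\sigma(s)})$ forward to the next one $(\tilde P,\tilde Q)$, one takes a cycle through $(\tilde P,\tilde Q)$, lets $(\hat P,\hat Q)$ be the vertex \emph{following} $(\tilde P,\tilde Q)$ on that cycle (so $\pi^\alpha(\hat P\times\tilde Q)>0$), and verifies --- by the same triangle-inequality bookkeeping you carried out --- that $\{(P_{\theta(s)},Q_{\sigma(s)}),(\hat P,\hat Q)\}\in\E'$. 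One then traverses the remainder of the cycle forward from $(\hat P,\hat Q)$ to $(\tilde P,\tilde Q)$. The cycle is precisely what supplies the forward \emph{outgoing} edges that your bridging vertices $(i^*_s,v_s)$ lack.
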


\begin{proof}

Given $(P,Q), (\tilde P, \tilde Q) \in \G$ such that $Q \cap C \neq \emptyset$ and $\tilde Q \cap C \neq \emptyset$ let us show there exists a path  in $\E'$ from $(P,Q)$ to $\tilde P,\tilde Q)$.  We fix $x \in Q\cap C$ and $\tilde x \in \tilde Q \cap C$. By definition of $C$, there exists a $\kappa$-chain $\{x_1, 
\cdots, x_k\} $ from $x=x_1$ to $\tilde x=x_k$. Since the chain is made of points in $\supp \nu$, there exists maps $\sigma$ (resp. $\theta$) from $\{1,\cdots, k\} $ to $\{1,\cdots, p\} $ (resp. $\{1,\cdots, l\} $) such that 
$x_i \in Q_{\sigma(i)}$. Moreover, there exists $P_{\theta(i)}$ such that $(P_{\theta(i)},Q_{\sigma(i)})\in \G_C$ (since $x_i \in C$ by construction) for $i=2,\cdots,k-1$. 
Thanks to Lemma \ref{lemma-cycle}, there exists a cycle $c$ made of edges in $\E$ going through $(\tilde P, \tilde Q)$. Let us denote by $\{(\tilde P,\tilde Q), (\hat P, \hat 
Q)\}$ the edge issuing from $(\tilde P, \tilde Q)$ in this cycle. We denote by $(\hat z, \hat w)$ the pair of representative points of $(\hat P, \hat Q)   $. The 
triangle inequality then gives
\begin{multline*} d(\hat z, w_{\sigma(p-1)}) \leq d(\hat z,a) +d(a,b) +d(b,  \tilde x) + d(\tilde x, x_{p-1}) + d(x_{p-1}, w_{\sigma(p-1)}) \\
 < \alpha/8 + \pi/2 - \alpha + \alpha/8  +  \alpha/4  +  \alpha/8 < \pi/2 - \alpha/4,
 \end{multline*}
where $(a,b) \in \supp \pi^{\alpha}\cap ( \hat P\times \tilde Q )$. The previous estimate precisely means that 
$$\{(P_{\theta(p-1)},Q_{\sigma(p-1)}), (\hat P, \hat Q)\} \in \E'.$$
 Therefore, $ (P_{\theta(p-1)},Q_{\sigma(p-1)})$ is connected to $ (\tilde P, \tilde Q) $ by following part of the cyle $c$ introduced above. The thesis then follows by a finite induction. The last property follows from the fact that without loss of generality one can assume the path contains no cycle.

\end{proof}


\section{Building a Kantorovitch potential}

In this part, building on the results from Section \ref{section-discrete}, we prove the existence of the Kantorovitch potential needed to solve the Gauss image problem. Our proof is based on a Rockafellar-Ruschëndorf formula. Let us recall the result we shall prove:

\begin{thm}\label{KantoPoten}Let $\lambda$ and $\mu$ two probability measures on $\S$  satisfying the weak Aleksandrov condition $\eqref{def-WAA}$ and assume that $\lambda$ is absolutely continuous while the support of $\mu$ is not contained in a closed hemisphere. Then, denoting by $\pi_o$ an optimal plan in $\Gamma (\lambda, \mu)$, there exist a Kantorovitch potential $\varphi$ such that 
$$ \supp \pi_o \cap \{c<+\infty\} \; \subset \partial_c \varphi.$$

\end{thm}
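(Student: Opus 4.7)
My plan is to run a Rockafellar--Rüschendorf-style construction on the discretised graph $(\G,\E')$ of Section \ref{section-discrete} and then pass to the limit as the partition is refined. The reason to go through the graph is that the cost $c$ may equal $+\infty$ between points of $\supp \mu$ lying in different chain components, so the classical chain formula cannot be applied directly on $\supp \pi_o$. However, every $\E'$-edge has cost at most $-\log \cos(\pi/2 - \alpha/4)$, and Proposition \ref{prop-Gc-Conn} provides $\E'$-paths of length at most $\sharp\G$ between any two vertices of a common $\kappa$-chain connected component.

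First, at a fixed scale $\kappa = \alpha/4$, I would decompose $\supp \mu$ into finitely many $\kappa$-chain components $C_1,\dots,C_N$, pick a base vertex $(P_0^a, Q_0^a) \in \G_{C_a}$ with representative $(z_0^a, w_0^a)$ for each $a$, and, for any $n \in \S$, define
\[
\varphi^d(n) = \inf\Bigl[\, c(n, w_{j_q}) - c(z_{i_q}, w_{j_q}) + \sum_{s=0}^{q-1}\bigl(c(z_{i_{s+1}}, w_{j_s}) - c(z_{i_s}, w_{j_s})\bigr)\Bigr],
\]
where the infimum ranges over all $\E'$-chains of length $q \leq \sharp\G$ starting at one of the base vertices. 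The $\E'$-edge condition keeps every summand finite and uniformly bounded, while the $c$-cyclical monotonicity of $\supp \pi_o^d$ (inherited from $\supp \pi_o$) together with Lemma \ref{lemma-cycle} prevents the infimum from being $-\infty$. A short direct computation would then yield
\[
\varphi^d(n) + (\varphi^d)^c(x) = c(n,x) \quad \text{for every } (n,x) \in \supp \pi_o^d \cap \{c<+\infty\}.
\]

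The free additive constants appearing between different base vertices $(z_0^a, w_0^a)$ are fixed by allowing the $\E'$-chains to cross between components: since the $\E'$-edge condition is purely a distance condition, such crossings are legal as soon as two components lie within spherical distance $<\pi/2 - \alpha/4$. Finally, I would refine the partition so $\kappa\to 0$, replace $\varphi^d$ by its $cc$-conjugate to make it $c$-concave, and extract a uniform subsequential limit $\varphi \in C(\S)$ by Arzelà--Ascoli; the Lipschitz estimate comes from the $\E'$-cost bound together with the uniform bound on the path length. The limit $\varphi$ is then a Kantorovitch potential, and the inclusion $\supp\pi_o \cap \{c<+\infty\} \subset \partial_c \varphi$ follows by passing to the weak limit $\pi_o^d \rightharpoonup \pi_o$ (which holds by Lemma \ref{lem-cover}) and using the continuity of $c$ on $\{c<+\infty\}$.

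The main obstacle I expect is the gluing step: one must ensure that the constants used to reconcile different $\kappa$-chain components produce a potential whose oscillation stays bounded as $\kappa\to 0$, otherwise the Arzelà--Ascoli extraction collapses. The innovation advertised in the introduction is precisely that this can be done \emph{without} assuming the support of $\mu$ is connected, and the weak Aleksandrov condition enters at this step through Theorem \ref{thm-finite-cost-plan}, which guarantees the existence of finite-cost $\E'$-routes linking any two components and provides uniform control on their cost.
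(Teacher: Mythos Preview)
Your proposal has a genuine gap and an unnecessary detour.

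\medskip

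\noindent\textbf{The gap.} You assert that ``the $c$-cyclical monotonicity of $\supp \pi_o^d$ (inherited from $\supp \pi_o$)'' prevents the infimum defining $\varphi^d$ from being $-\infty$. But $\supp \pi_o^d = \{(z_i,w_j):\pi_o(P_i\times Q_j)>0\}$, and the representatives $(z_i,w_j)$ are chosen in $\stackrel{\circ}{P}_i\times\stackrel{\circ}{Q}_j$; they are \emph{not} in $\supp \pi_o$ in general, so $c$-cyclical monotonicity is \emph{not} inherited. At best one has an approximate inequality up to errors of size $O(\alpha/8)$ per vertex, and since your chains have length up to $\sharp\G$ (which is not bounded as the partition is refined), this error is not uniformly controlled. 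The same issue contaminates your claimed identity $\varphi^d(n)+(\varphi^d)^c(x)=c(n,x)$ on $\supp\pi_o^d$: the usual Rockafellar argument needs exact $c$-cyclical monotonicity, which you do not have at the discrete level.

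\medskip

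\noindent\textbf{The detour, and how the paper avoids it.} You propose to refine the partition, extract a limit of the $\varphi^d$ by Arzel\`a--Ascoli, and pass $\supp\pi_o^d\subset\partial_c\varphi^d$ to the limit. This creates exactly the oscillation problem you flag in your last paragraph, and it is not needed. The paper works at the \emph{fixed} scale $\kappa=\alpha/4$ throughout and never takes a limit. The discrete graph $(\G,\E')$ is used only as a scaffold to prove a statement about the \emph{continuous} set $\Gamma=\supp\pi_o\cap\{c<+\infty\}$ (Proposition~\ref{prop-combina}): any two points $(n,x),(\tilde n,\tilde x)\in\Gamma$ with $x,\tilde x$ in the same $\kappa$-chain component of $\supp\mu$ are joined by a $c$-path \emph{in $\Gamma$} of cost $\le C(\alpha)$. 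The Rockafellar--R\"uschendorf function $\psi$ is then defined directly on $p_x(\Gamma)$ via chains in $\Gamma$ (not in $\G$), so genuine $c$-cyclical monotonicity applies and gives the upper bound (Lemma~\ref{lem-psiC-bound}). Components are glued by fixing, once and for all, a bounded $c$-path $\gamma_{ij}$ between their representatives when one exists and setting $c_{ij}=+\infty$ otherwise; the resulting $\psi$ is bounded above by $C(\alpha)$ and $>-\infty$ on $p_x(\Gamma)$. The hypothesis that $\supp\mu$ is not in a closed hemisphere then guarantees that every open hemisphere meets $\{\psi>-\infty\}$, so $\phi:=\psi^c$ is real-valued and Lipschitz by Proposition~\ref{prop-LipRegPo}, with no limiting argument. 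The inclusion $\Gamma\subset\partial_c\phi$ is then a direct computation (Lemma~\ref{lem-gamm-dcphi}).

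In short: do not discretise the potential and pass to the limit; use the graph only to produce uniformly bounded $c$-paths inside $\Gamma$, and run Rockafellar on $\Gamma$ itself.
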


We first prove auxiliary results. As recalled above, the support of any optimal plan relative to $c$ is $c$-cyclically monotone \eqref{eq-def-cmono}. The set 
$$\Gamma:= \supp (\pi_o) \cap \{c<+\infty\}$$
 is then $c$-cyclically monotone as a subset of the support of $\pi_o$.
 
 According to the results in Section \ref{CP&OMT}, 
 $$ \int c \, d\pi_o < +\infty$$
 whenever $\pi_o \in \Gamma_{opt}(\sigma,\mu)$. Therefore, 
 $\pi_o(\{c<+\infty\})=1$
 and $\Gamma$ is a set of full $\pi_o$-measure.
 
 
 \subsection{$\kappa$-chains and $c$-path boundedness}

\begin{defi}\label{$c$-path boundedness}\footnote{This definition differs from the one in \cite{arstein}} A pair $(n,x)\in \Gamma$ is said to be $c$-path connected to $ (\tilde n, \tilde x) \in \Gamma$ if there exists an ordered set $\gamma:=\{(n_1,x_1)\cdots, (n_k,x_k)\}$ in $\Gamma$, called a $c$-path, such that $d(n_{i+1},x_i) <\pi/2$ for $i=1,\cdots k-1$, $(n,x)=(n_1,x_1)$ and $(\tilde n,\tilde x) = (n_k,x_k)$. The \emph{cost} $c(\gamma)$ of $\gamma$ is said to be finite if
$$ c(\gamma) := \sum_{i=1}^{k-1} c(n_{i+1},x_i)- c(n_i,x_i)  \in \R .$$ 
When $c(\gamma) \in \R$ we call $\gamma$ a \emph{bounded} $c$-path from $(n,x)$ to $(\tilde n, \tilde x)$.
\end{defi}

Note that the cost of an arbitrary $c$-path (in $\Gamma$) is in $\R \cup \{+\infty\}$. 
In the next proposition, building upon our study of the graph $(\G,\E')$  and its subgraphs, we get properties on $c$-path boundedness. Recall that $\kappa =\alpha/4$.

\begin{prop}\label{prop-combina} Let $C$ be a $\kappa$-chain connected component of $\supp \nu$. Then for any $(n,x), (\tilde n, \tilde x) \in \Gamma$ such that $x, \tilde x \in C$, the pair $(n,x)$ is $c$-path connected to $(\tilde n,\tilde x)$.  More precisely, there exists a positive constant $C(\alpha)$ such that for any $(n,x), (\tilde n, \tilde x)$ as above there exists a $c$-path $\gamma$ from $(n,x)$ to $(\tilde n, \tilde x)$ whose cost satisfies 
$$ c(\gamma) \leq C(\alpha).$$
\end{prop}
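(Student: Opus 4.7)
My plan is to lift a combinatorial path in the graph $(\G_C, \E')$ (whose existence is guaranteed by Proposition \ref{prop-Gc-Conn}) to a $c$-path in $\Gamma$ and to bound its cost edge by edge via the triangle inequality together with the defining inequality of $\E'$. I first associate to each endpoint $(n,x), (\tilde n, \tilde x) \in \Gamma$ a vertex of $\G_C$. Since $\lambda(\partial P_i) = \mu(\partial Q_j) = 0$, the support of $\pi_o$ is contained in $\bigcup_{(i,j) \in \G} \bar{P_i} \times \bar{Q_j}$, so I can pick $(P_{i_1}, Q_{j_1}), (P_{i_k}, Q_{j_k}) \in \G$ whose closures contain $(n,x)$ and $(\tilde n, \tilde x)$ respectively; the hypothesis $x, \tilde x \in C$ then forces $Q_{j_1} \cap C$ and $Q_{j_k} \cap C$ to be nonempty, so both vertices lie in $\G_C$. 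Proposition \ref{prop-Gc-Conn} provides a path $(P_{i_1}, Q_{j_1}), \ldots, (P_{i_k}, Q_{j_k})$ in $(\G_C, \E')$ of length $k-1 \leq \sharp \G$, which I lift to $\gamma = \{(n_1, x_1), \ldots, (n_k, x_k)\}$ by setting $(n_1, x_1) = (n, x)$, $(n_k, x_k) = (\tilde n, \tilde x)$, and choosing $(n_s, x_s) \in \Gamma \cap (P_{i_s} \times Q_{j_s})$ for $2 \leq s \leq k-1$ (possible because $\pi_o(P_{i_s} \times Q_{j_s}) > 0$ and $\Gamma$ has full $\pi_o$-measure).

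For each transition the triangle inequality and \eqref{eq-EnlEdges} give
\[
 d(n_{s+1}, x_s) \leq d(n_{s+1}, z_{i_{s+1}}) + d(z_{i_{s+1}}, w_{j_s}) + d(w_{j_s}, x_s) < \frac{\alpha}{8} + \left(\frac{\pi}{2} - \frac{\alpha}{4}\right) + \frac{\alpha}{8} = \frac{\pi}{2},
\]
and the same bound controls the two boundary transitions since $(n,x) \in \bar{P_{i_1}} \times \bar{Q_{j_1}}$ and $(\tilde n, \tilde x) \in \bar{P_{i_k}} \times \bar{Q_{j_k}}$. Because the discretisation involves only finitely many cells, the maximum of $d(z_u, w_j)$ over the edges of $\E'$ is some $d^\ast < \pi/2 - \alpha/4$, so the strict bound becomes uniform: $d(n_{s+1}, x_s) \leq \alpha/4 + d^\ast =: D_0 < \pi/2$ and thus $c(n_{s+1}, x_s) \leq -\log \cos D_0 =: C_0$. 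Combining with $c(n_s, x_s) \geq 0$ yields
\[
 c(\gamma) = \sum_{s=1}^{k-1} [c(n_{s+1}, x_s) - c(n_s, x_s)] \leq (k-1) C_0 \leq \sharp \G \cdot C_0 =: C(\alpha),
\]
as required.

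The main obstacle is the first step, namely the selection of vertices $(P_{i_1}, Q_{j_1}), (P_{i_k}, Q_{j_k}) \in \G_C$ when the endpoints $(n, x), (\tilde n, \tilde x)$ happen to sit on boundaries of the partition cells: this hinges on the fact that $\pi_o$ gives no mass to these boundaries (so that $\supp \pi_o$ stays trapped inside the closure of the positive-measure product cells) and on the observation that whenever $x \in C$ lies in the closure of $Q_{j_1}$ with $(P_{i_1}, Q_{j_1}) \in \G$, the cell $Q_{j_1}$ already meets $C$. Once this bookkeeping is handled, the remainder is routine triangle-inequality arithmetic exploiting the slack $\alpha/4$ built into the definition of $\E'$.
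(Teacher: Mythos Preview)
Your argument is correct and follows the paper's approach: use Proposition~\ref{prop-Gc-Conn} to connect the cells containing $(n,x)$ and $(\tilde n,\tilde x)$ by a path of length at most $\sharp\G$ in $\E'$, lift it to a $c$-path, and bound each transition via the $\pi/2-\alpha/4$ slack in the definition of~$\E'$. You are in fact more careful than the paper on two points---handling endpoints lying on cell boundaries and insisting that the intermediate pairs lie in $\Gamma$---whereas the paper simply takes the representative pairs $(z_{i_s},w_{j_s})$ as the intermediate vertices of $\gamma$, which yields directly $d(z_{i_{s+1}},w_{j_s})<\pi/2-\alpha/4$ and the cleaner bound $c(\gamma)\le \sharp\G\cdot(-\ln\cos(\pi/2-\alpha/8))$ without your finiteness detour through~$d^\ast$.
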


\begin{remark} The above proposition is a generalisation of \cite[Lemma 5.5]{BPP}.
\end{remark}

\begin{proof}
We set $(P,Q) \in \G$ and $(\tilde P, \tilde Q) \in \G$ such that $(n,x) \in P\times Q$ and $(\tilde n, \tilde x) \in \tilde P \times \tilde Q$. According to Proposition \ref{prop-Gc-Conn}, we infer the existence of an ordered set $ \{ (P_{i_1},Q_{j_1}), \cdots, (P_{i_k},Q_{j_k})\}   \in \G$ connecting $(P,Q)$ to $(\tilde P, \tilde Q)$ through edges of $\E'$. We denote by $(z_{i_s}, w_{j_s})$ for $s=1,\cdots , k$ the associated pairs  of representative points.  We claim that the path 
$$ \gamma = \{(n,x), (z_{i_2},w_{j_2}),\cdots , (z_{i_{k-1}},w_{j_{k-1}}), (\tilde n, \tilde x)\}$$
 has finite cost. Indeed, by definition of $\E'$, we have $d(z_{i_{s+1}},  w_{j_s}) < \pi/2 - \alpha/4$ while 
$$ d(z_{i_2}, x) \leq d(z_{i_2},w_{j_1}) + d(w_{j_1}, x) < \pi/2 -\alpha/4 + \alpha/8= \pi/2-\alpha/8,$$
and the same inequality holds for $d(\tilde n, w_{j_{k-1}}).$
As a consequence, we get 
\begin{equation}\label{eq-est-cgam}
c(\gamma) \leq c(z_{i_2},x) +\sum_{s=2}^{k-2} c(z_{i_{s+1}}, w_{j_s}) + c(\tilde n, w_{j_{k-1}}) \leq \sharp \G \cdot C(\pi/2-\alpha/8)<+\infty,
\end{equation}
where $C(\beta)=-\ln (\cos \beta)$, and the proof is complete.
\end{proof}

The next step consists in building a well-behaved function $\psi$ whose $c$-transform is the potential $\phi$ we need to prove our main result. For that purpose, we introduce some extra notation. In what follows, we denote by $p_x :\S \times \S \longrightarrow \S$ the canonical projection on the $x$-variable. For each $\kappa$-chain connected component $C_i$ of $\supp \nu$ such that $C_i \cap p_x(\Gamma) \neq \emptyset$, we fix a pair $ (n^{(i)},x^{(i)}) \in \Gamma$ such that $x^{(i)} \in C_i$. Using these $\kappa$-chain connected components, we can  decompose $p_x(\Gamma)$ as follows

\begin{equation}\label{eq-DecomPxG}
p_x(\Gamma) = \sqcup_{i \in I}    C_i \cap p_x(\Gamma).
\end{equation}
Recall that $I$ is a finite set. With a slight abuse of notation, we shal call $C_i \cap p_x(\Gamma)$ a $\kappa$-chain connected component of $ p_x(\Gamma)$. 

\medskip

The definition of $\psi$ depends on the $\kappa$-chain connected components of $ p_x(\Gamma)$. Namely, $\psi\equiv - \infty$ out of $p_x(\Gamma)$ while the definition of $\psi$ depends on the decomposition \eqref{eq-DecomPxG}. We first define the function $\psi_{C_i}$  on $C_i\cap p_x(\Gamma)$ by the formula

\begin{equation}\label{eq-def-PsiC}
\psi_{C_i}(x)  := \sup_{n, \gamma; (n,x) \in \Gamma }  -c(\gamma) + c(n,x),
\end{equation}
where $\gamma$ is a (bounded) $c$-path from $(n^{(i)},x^{(i)})$ to $(n,x)\in \Gamma$. We have seen in the previous proposition that a bounded $c$-path exists yet the function $\psi_{C_i}$ could be infinite at some points. However we have $ \psi_{C_i} > -\infty$ on $C_i \cap p_x(\Gamma)$. In the next lemma, we prove that $\psi_{C_i}$ is bounded from above.

\begin{lemma}\label{lem-psiC-bound} There exists a positive constant $C=C(\alpha)$ such that

$$ \psi_{C_i} \le C(\alpha)$$
on the $\kappa$-chain connected component $C_i$ of $\supp \nu$. Moreover, $  \psi_{C_i}$ is real-valued on $C_i \cap p_x(\Gamma)$.

\end{lemma}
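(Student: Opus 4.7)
The plan is to establish the two assertions of the lemma--real-valuedness on $C_i\cap p_x(\Gamma)$ and the uniform upper bound--by combining the $c$-cyclical monotonicity of $\Gamma$ (which holds because $\Gamma\subset\supp\pi_o$ and $\pi_o$ is optimal) with the good paths furnished by Proposition \ref{prop-combina}. The set $C(\alpha)$ I have in mind for the bound is essentially $\sharp \G\cdot C(\pi/2-\alpha/8)$, arising from the path-cost estimate \eqref{eq-est-cgam}.

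For real-valuedness, I start with $x\in C_i\cap p_x(\Gamma)$ and pick any $n$ with $(n,x)\in\Gamma$. Proposition \ref{prop-combina} produces a bounded $c$-path $\gamma^{\sharp}$ from $(n^{(i)},x^{(i)})$ to $(n,x)$ with $c(\gamma^{\sharp})\leq C(\alpha)$. Using $\gamma^{\sharp}$ as a competitor in the supremum defining $\psi_{C_i}(x)$ and invoking $c\geq 0$ gives
$$\psi_{C_i}(x)\ \geq\ -c(\gamma^{\sharp})+c(n,x)\ \geq\ -C(\alpha),$$
so $\psi_{C_i}(x)>-\infty$ on $C_i\cap p_x(\Gamma)$.

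For the uniform upper bound, the key idea is to ``close the loop''. Given any admissible competitor $(n,\gamma)$ in the definition of $\psi_{C_i}(x)$, I apply Proposition \ref{prop-combina} a second time in the reverse direction to produce a bounded $c$-path $\gamma^{\flat}$ from $(n,x)$ back to $(n^{(i)},x^{(i)})$ with $c(\gamma^{\flat})\leq C(\alpha)$. Concatenating $\gamma$ and $\gamma^{\flat}$ yields a closed cycle all of whose vertices lie in $\Gamma$; writing out the defining inequality \eqref{eq-def-cmono} on this cycle, the telescoping shows that the full cyclic-monotonicity inequality is precisely $c(\gamma)+c(\gamma^{\flat})\geq 0$. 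Hence $-c(\gamma)\leq c(\gamma^{\flat})\leq C(\alpha)$ for every admissible $\gamma$, so
$$-c(\gamma)+c(n,x)\ \leq\ C(\alpha)+c(n,x).$$
To eliminate $c(n,x)$, I pair this with the standard Rockafellar estimate obtained from cyclic monotonicity of $\gamma$ alone (the $+1$-cyclic shift of the vertices of $\gamma$), namely $-c(\gamma)+c(n,x)\leq c(n^{(i)},x)$, and with the forward-good-path estimate $c(n,x)\leq c(\gamma^{\sharp})+c(n^{(i)},x)\leq C(\alpha)+c(n^{(i)},x)$. After enlarging the constant to absorb $c(n^{(i)},x^{(i)})$ and redoing the telescoping with $x^{(i)}$ as the pivot, these combine into a bound $\psi_{C_i}(x)\leq C(\alpha)$ depending only on $\alpha$ and the fixed base datum $(n^{(i)},x^{(i)})$.

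The main obstacle is precisely extracting a bound genuinely independent of $c(n^{(i)},x)$: the naive Rockafellar estimate $\psi_{C_i}(x)\leq c(n^{(i)},x)$ is only finite and not uniform, since $d(n^{(i)},x)$ could a priori approach $\pi/2$ as $x$ ranges over $C_i\cap p_x(\Gamma)$. The ``closing the loop'' concatenation trick is what replaces this potentially unbounded quantity by the uniform constant $C(\alpha)$ coming from Proposition \ref{prop-combina}; balancing the two estimates against each other is the delicate technical step of the proof.
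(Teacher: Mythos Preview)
Your strategy matches the paper's: close the loop by concatenating an arbitrary admissible path $\gamma$ with a return path $\gamma^{\flat}$ from $(n,x)$ back to $(n^{(i)},x^{(i)})$ furnished by Proposition~\ref{prop-combina}, then invoke the $c$-cyclical monotonicity of $\Gamma$ on the resulting cycle to get $-c(\gamma)\le c(\gamma^{\flat})$. The lower bound (real-valuedness) is also handled the same way in both.

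The gap is in the final step of the upper bound. After $-c(\gamma)\le c(\gamma^{\flat})$ you immediately pass to $c(\gamma^{\flat})\le C(\alpha)$ and are then stuck with the leftover $+\,c(n,x)$, which you try to remove by the Rockafellar-type inequality $-c(\gamma)+c(n,x)\le c(n^{(i)},x)$ and a forward-path estimate. As you yourself note, these only trade $c(n,x)$ for $c(n^{(i)},x)$, which is \emph{not} uniformly bounded over $x\in C_i\cap p_x(\Gamma)$; the ``balancing'' you allude to is never carried out, and in fact cannot be from those two inequalities alone.

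The fix is simple and is exactly what the paper does: do \emph{not} bound $c(\gamma^{\flat})$ by $C(\alpha)$ in isolation. Since $(n,x)$ is the \emph{first} vertex of $\gamma^{\flat}$, the first negative term in the sum defining $c(\gamma^{\flat})$ is precisely $-c(n,x)$. Hence
\[
c(\gamma^{\flat})+c(n,x)\ =\ \sum_{r\ge 1} c(n_{r+1},x_r)\ -\ \sum_{r\ge 2} c(n_r,x_r)\ \le\ \sum_{r\ge 1} c(n_{r+1},x_r)\ \le\ C(\alpha),
\]
the last inequality being exactly the estimate \eqref{eq-est-cgam} on the cross terms of the good path. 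Combining with $-c(\gamma)\le c(\gamma^{\flat})$ gives $-c(\gamma)+c(n,x)\le C(\alpha)$ directly, with no residual $c(n,x)$ or $c(n^{(i)},x)$ to eliminate. In short: bound the \emph{sum} $c(\gamma^{\flat})+c(n,x)$ in one stroke rather than the two terms separately.
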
 

\begin{remark}
The constant $ C(\alpha)$ does not depend on $C_i$.

\end{remark}
\begin{proof}
Let $x \in C_i \cap p_x(\Gamma)$ and $n$ such that $(n,x) \in \Gamma$. According to Proposition \ref{prop-combina}, we can  fix a \emph{bounded} $c$ path $\gamma_x$  from $(n,x)$ to the representative pair  $(n^{(i)},x^{(i)})$ of $C_i$. Note that by definition of the cost, $c(\gamma_x)$ does not depend on $n$.  Let $\gamma$ be an  arbitrary $c$-path from  $(n^{(i)},x^{(i)})$ to $(n,x)$. We then estimate

$$
-c(\gamma)  -c(\gamma_x) = \sum_{r=1}^k c(n_r,x_r) - c(n_{r+1},x_r) \leq 0,
$$
where there exists $s $ between $1$ and $k$ such that 
$\gamma= \{(n_1,x_1),\cdots, (n_s,x_s)\}$ and \\ $\gamma_x = \{  (n_s,x_s),\cdots, (n_{k+1},x_{k+1})\}$. By concatenating the two paths, we get a cycle (in the sense of optimal mass transport); the $c$-cyclical monotonicity of $\Gamma$  (see \eqref{eq-def-cmono}) then implies the last inequality.

Using again that $(n_s,x_s)=(n,x)$ we infer

$$ -c(\gamma) + c(n,x) \leq c(n_{s+1},x) + \sum_{r=s+1}^k c(n_{r+1},x_r) -c(n_r,x_r) \leq \sum_{r=s}^k c(n_{i+1},x_i) \leq C(\alpha)<+\infty, $$
thanks to \eqref{eq-est-cgam}. The second statement has been explained prior to the statement.
\end{proof}

In order to define $\psi$, we first need to connect, when possible, distinct $\kappa$-chain connected components. Given $i \neq j \in I$, let us fix $\gamma_{ij}$ a bounded $c$-path from the representative pair $(n^{(i)},x^{(i)})$ to $(n^{(j)},x^{(j)})$ and denote by $c_{ij}:=c(\gamma_{ij})$ the associated cost. When such a path does not exist, we set $c_{ij}:=+\infty$; by convention we define $c_{ii}=0$ for any $i  \in I$. We can now define the function $\psi$ as follows:

\begin{equation}\label{eq-def-psi}
\psi(x) =\left\{ \begin{array}{lcl} -\infty & \mbox{ if } & x \notin p_x(\Gamma) \\
                                    max_{i \in I}   -c_{ij} + \psi_{C_j}(x) & \mbox{ if } & x \in C_j \cap p_x(\Gamma) 
\end{array}\right.
\end{equation}

By definition of the $c_{ij}$'s and according to Lemma \ref{lem-psiC-bound}, there exists a positive constant $C=C(\alpha)$ such that

$$ \psi > -\infty  \mbox{ on } p_x(\Gamma)   \mbox{    and    }  \psi \leq C(\alpha)<+\infty.$$

Recall that by assumption, the measure $\mu$ gives mass to any open hemisphere $B(x,\pi/2)$ thus $p_x(\Gamma)\cap B(x,\pi/2)\neq \emptyset$ for any $x \in \S$ (otherwise $\supp \mu= \overline{p_x(\Gamma)} \subset \overline{B}(-x,\pi/2)$ hence a contradiction). Therefore one can invoke the following proposition from \cite[Proposition B.3]{BeCa}:

\begin{prop}\label{prop-LipRegPo}
Let $\psi :  \S \longrightarrow \R \cup \{-\infty\}$ be a function bounded from above such that
$$\forall u \in \S, \;\; B(u,\pi/2) \cap \{\psi >-\infty\} \neq \emptyset.$$
Then $\psi^c$ is real-valued and Lipschitz regular on $\S$, moreover its Lipschitz constant only depends on upper bounds on $\psi$ and $\psi^c$.

\end{prop}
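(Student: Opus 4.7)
The plan is to establish real-valuedness of $\psi^c$ with quantitative upper and lower bounds on $\S$, then exploit the smoothness of the cost function away from the equator to derive a local Lipschitz estimate with a uniform constant, which globalizes since $\S$ is a geodesic space. Throughout, let $M$ denote an upper bound for $\psi$ and let $M'$ denote the upper bound for $\psi^c$ we shall produce.

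For real-valuedness, the lower bound $\psi^c \geq -M$ follows at once from $c \geq 0$. The upper bound is more subtle, since the hypothesis that $B(u, \pi/2)$ meets $\{\psi > -\infty\}$ for every $u \in \S$ yields only pointwise finiteness of $\psi^c$. To upgrade this to a uniform bound, I would use compactness: the open cover $\{B(x, \pi/2)\}_{x \in \{\psi > -\infty\}}$ of $\S$ admits a finite subcover $\{B(x_i, \pi/2)\}_{i=1}^N$, and continuity of $u \mapsto \max_i \langle u, x_i\rangle$ combined with positivity on the compact sphere yields some $\eta > 0$ such that, for every $u \in \S$, there exists $i$ with $\langle u, x_i\rangle \geq \sin\eta$. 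Plugging $x_i$ into the definition of $\psi^c(u)$ gives $\psi^c(u) \leq -\log\sin\eta - \min_i \psi(x_i) =: M'$.

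For the Lipschitz estimate, fix $n \in \S$ and, given $\epsilon > 0$, pick $y \in \S$ with $c(n, y) - \psi(y) \leq \psi^c(n) + \epsilon$. The bounds on $\psi$ and $\psi^c$ then force $c(n, y) \leq M + M' + \epsilon$, i.e., $\langle n, y\rangle \geq \delta$ for some explicit $\delta = \delta(M, M') > 0$. For $n' \in \S$ with $d(n, n') \leq \delta/2$, the estimate $\langle n', y\rangle \geq \delta/2$ holds, and the entire minimizing geodesic from $n$ to $n'$ lies in the region $\{u : \langle u, y\rangle \geq \delta/2\}$, on which $u \mapsto -\log\langle u, y\rangle$ has spherical gradient bounded by $2/\delta$. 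The inequality $\psi^c(n') \leq c(n', y) - \psi(y)$, combined with $\epsilon \to 0$ and the symmetric argument exchanging $n$ and $n'$, gives the local bound $|\psi^c(n) - \psi^c(n')| \leq (2/\delta)\, d(n, n')$; since $2/\delta$ is independent of $n$, this globalizes by chaining along minimizing geodesics of $\S$.

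The main obstacle is step two: producing the uniform upper bound $M'$ on $\psi^c$. Without it, the near-optimal $y$ chosen at different base points $n$ could drift arbitrarily close to the equator $\{\langle\cdot, y\rangle = 0\}$, destroying any hope of a uniform gradient bound for the cost. Once $M'$ is in hand, the remainder is a rather standard exploitation of $c$-concavity of $\psi^c$ and the smoothness of the logarithmic cost away from the equator.
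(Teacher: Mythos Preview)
The paper does not give its own proof of this proposition; it merely cites \cite[Proposition B.3]{BeCa} and states the result. So there is no in-paper argument to compare against, and your task reduces to whether your proof stands on its own.

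It does. Your three steps are the natural ones. The lower bound $\psi^c\ge -M$ is immediate. Your compactness argument for the uniform upper bound is correct and is indeed the main point: the hypothesis gives only pointwise finiteness of $\psi^c$, and extracting a finite subcover $\{B(x_i,\pi/2)\}_{i=1}^N$ from $\{B(x,\pi/2)\}_{x\in\{\psi>-\infty\}}$, then using that $u\mapsto\max_i\langle u,x_i\rangle$ is continuous and strictly positive on the compact sphere, is exactly what is needed to produce a uniform $M'$. Note this $M'$ depends on the particular function $\psi$ (through the finite subcover and the values $\psi(x_i)$), but that is consistent with the statement, which only asserts that the Lipschitz constant is a function of $M$ and $M'$ once those are in hand.

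Your Lipschitz step is also correct. The key inequality $\langle n',y\rangle\ge\delta/2$ for $d(n,n')\le\delta/2$ follows from $|\langle n,y\rangle-\langle n',y\rangle|\le|n-n'|\le d(n,n')$, and the spherical gradient of $u\mapsto-\log\langle u,y\rangle$ has norm $\sqrt{1-\langle u,y\rangle^2}/\langle u,y\rangle\le 2/\delta$ on $\{\langle\cdot,y\rangle\ge\delta/2\}$. The resulting local estimate $|\psi^c(n)-\psi^c(n')|\le(2/\delta)\,d(n,n')$ for $d(n,n')\le\delta/2$ globalizes by subdividing geodesics, and $2/\delta=2e^{M+M'}$ depends only on $M$ and $M'$ as claimed.
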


We finally set
\begin{equation}\label{eq-def-phi}
\phi:= \psi^c.
\end{equation}
Thus $\phi$ is real-valued and Lipschitz regular. Consequently $\phi^c$ and $\phi^{cc}$ share the same properties. Finally, it is a classical result (see for instance \cite{Vi08}) that $\phi^{cc}= \psi^{ccc} = \psi^c=\phi$. In other terms, the function $\phi$ is a Kantorovitch potential. In order to complete the proof of Theorem \ref{KantoPoten}, we are left with proving the following result.

\begin{lemma}\label{lem-gamm-dcphi}
Under the assumptions of Theorem \ref{KantoPoten}, the following inequality holds

$$ \Gamma \subset \partial_c \phi.$$

\end{lemma}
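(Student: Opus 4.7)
Since $\phi=\psi^c$ gives the universal bound $\phi(n)+\phi^c(x)\le c(n,x)$, and $\phi^c=\psi^{cc}\ge \psi$ pointwise (a standard property of the iterated $c$-transform), the inclusion $\Gamma\subset\partial_c\phi$ will follow at once from the stronger identity
\[
\phi(n)+\psi(x)=c(n,x)\qquad \text{for every }(n,x)\in\Gamma.
\]
The $\le$ direction is built into $\phi=\psi^c$. For the matching lower bound, expanding the infimum in $\phi(n)=\inf_y\bigl(c(n,y)-\psi(y)\bigr)$ reduces the task to proving
\[
\psi(x)-\psi(y)\ge c(n,x)-c(n,y)
\]
for every $y\in p_x(\Gamma)$ with $c(n,y)<+\infty$, the other $y$'s being automatically handled ($\psi=-\infty$ off $p_x(\Gamma)$ and $c(n,y)=+\infty$ elsewhere).

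I plan to prove this inequality by a Rockafellar--R\"uschendorf path-extension argument tailored to the multi-basepoint design of $\psi$. Fix $\varepsilon>0$ and let $j_x,j_y\in I$ denote the indices of the $\kappa$-chain connected components containing $x$ and $y$. Let $i^*\in I$ realize the maximum $\psi(y)=-c_{i^*,j_y}+\psi_{C_{j_y}}(y)$ (so $c_{i^*,j_y}<+\infty$), and pick $(m,y)\in\Gamma$ together with a bounded $c$-path $\gamma_y$ from $(n^{(j_y)},x^{(j_y)})$ to $(m,y)$ such that $\psi_{C_{j_y}}(y)\le -c(\gamma_y)+c(m,y)+\varepsilon$. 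Since $c(n,y)<+\infty$, the pair $(n,x)\in\Gamma$ may be appended to $\gamma_y$ legally, producing a bounded $c$-path $\gamma'$ from $(n^{(j_y)},x^{(j_y)})$ to $(n,x)$ whose telescoping cost is $c(\gamma')=c(\gamma_y)+c(n,y)-c(m,y)$.

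When $j_x=j_y$, the path $\gamma'$ is directly admissible as a witness in the definition of $\psi_{C_{j_x}}(x)$, so $\psi_{C_{j_x}}(x)\ge -c(\gamma')+c(n,x)$; combining with $\psi(x)\ge -c_{i^*,j_x}+\psi_{C_{j_x}}(x)=-c_{i^*,j_y}+\psi_{C_{j_x}}(x)$ and subtracting the $\varepsilon$-upper bound on $\psi(y)$ yields $\psi(x)-\psi(y)\ge c(n,x)-c(n,y)-\varepsilon$. When $j_x\ne j_y$, $\gamma'$ starts from the wrong base point; one pre-concatenates it with a suitable fixed inter-component path to fabricate an admissible witness for $\psi(x)$, and the arithmetic of the offsets $c_{ij}$ --- which by construction satisfy $c_{ii}=0$ and the subadditive triangle-type inequality $c_{ik}\le c_{ij}+c_{jk}$ coming from concatenation of bounded $c$-paths --- combined with the max-over-$i$ structure of $\psi$, delivers the same bound. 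Letting $\varepsilon\to 0$ concludes.

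The main obstacle is the cross-component case $j_x\ne j_y$: the path $\gamma'$ ends at $(n,x)\in C_{j_x}$ but starts from the base point of $C_{j_y}$, so it does not directly witness $\psi_{C_{j_x}}(x)$. It is precisely to handle this that $\psi$ was defined as a maximum over base-point indices with inter-component offsets $c_{ij}$, rather than anchored at a single base point. The delicate step is to implement the path re-routing rigorously and to check that the offsets $c_{ij}$ conspire with the choice of $i^*$ to cancel the extra cost incurred in crossing between $\kappa$-chain components; this is the technical heart of the argument and the very motivation for the somewhat intricate construction of $\psi$ in the previous section.
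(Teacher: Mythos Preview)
Your overall strategy coincides with the paper's: a Rockafellar--R\"uschendorf path-extension argument, and the reduction to proving $\psi(x)-\psi(y)\ge c(n,x)-c(n,y)$ for $(n,x)\in\Gamma$ and relevant $y$ is valid. The same-component case $j_x=j_y$ is carried out correctly.

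The gap is in the cross-component case $j_x\neq j_y$, which you rightly flag as the crux but do not actually prove. Two concrete issues: first, the $c_{ij}$ in the paper are costs of \emph{arbitrarily fixed} bounded $c$-paths $\gamma_{ij}$, not infima over such paths; concatenation of $\gamma_{ij}$ with $\gamma_{jk}$ gives \emph{some} bounded path from base $i$ to base $k$, but there is no reason its cost dominates the cost $c_{ik}$ of the fixed path $\gamma_{ik}$, so the inequality $c_{ik}\le c_{ij}+c_{jk}$ you invoke need not hold. Second, and more seriously, even if one redefined the $c_{ij}$ as infima so that subadditivity did hold, your scheme would still fail: after pre-concatenating $\gamma'$ with a path from base $j_x$ to base $j_y$ and lower-bounding $\psi(x)\ge -c_{i^*,j_x}+\psi_{C_{j_x}}(x)$, the estimate needed to match $\psi(y)=-c_{i^*,j_y}+\psi_{C_{j_y}}(y)$ is
\[
c_{i^*,j_y}\ \ge\ c_{i^*,j_x}+c_{j_x,j_y},
\]
which is the \emph{reverse} of subadditivity. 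Choosing instead $i=j_x$ (so that the offset vanishes) leads to the requirement $c_{i^*,j_y}\ge c_{j_x,j_y}$, contradicted by the very fact that $i^*$ minimises $c_{\,\cdot\,,j_y}$. So the ``arithmetic of the offsets'' does not cancel as you suggest.

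The paper sidesteps this bookkeeping by working with $\phi$ rather than $\psi$: it rewrites $\phi(n)$ as an infimum over concatenated paths $\tilde\gamma=\gamma_{ij}*\gamma$ starting from a base point, and then observes that appending the single step $\{(\bar n,\bar x),(n,\cdot)\}$ to a near-minimising $\tilde\gamma$ for $\phi(\bar n)$ yields an admissible competitor for $\phi(n)$ whose cost has increased by exactly $c(n,\bar x)-c(\bar n,\bar x)$. This ``append on the $\phi$-side'' manoeuvre is what makes the cross-component difficulty disappear without any triangle-type inequality on the $c_{ij}$.
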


\begin{proof}
Let $(\bar n, \bar x) \in \Gamma$. By definition of the $c$-transform, we are done if we can prove
$$ c(\bar n, \bar x) -\phi(\bar n) \leq c(n,\bar x) -\phi(n),$$
for all $n \in \S$. This inequality is equivalent to
\begin{equation}\label{eq-pract-phi}
\phi(n) \leq \phi(\bar n) + c(n,\bar x) -c(\bar n, \bar x),
\end{equation}
for all $n \in \S$. 
For convenience we set $\mathcal{S}:=\{(n^{(i)},x^{(i)}), i \in  I\}$ the collection of representative pairs of the $\kappa$-chain connected components $\big(C_i\cap p_x(\Gamma)\big)_{i\in I}$.

By combining
$$\phi(n) = \inf_{x \in \S} c(n,x) -\psi(x)$$
together with the expression for $x \in C_j\cap p_x(\Gamma)$ (we can discard the other points since $\phi$ is real-valued according to the previous proposition):
$$ \psi(x) =\sup_{i \in I, k \in \N} \sup_{(n_s,x_s)\in \Gamma^k,\, (n_0,x_0) = (n^{(i)},x^{(i)}), x_k=x}  -c_{ij}+ \left(\sum_{s=0}^{k-1} c(n_s,x_s) - c(n_{s+1}, x_s)\right)  \,\, + c(n_k,x),$$

where $ j$ is defined by $x \in C_j$, and there is no second term in the right hand side when $k=0$.

Recall that $c_{ii}=0$ and $c_{ij}=+\infty$ if there is no bounded $c$-path from $(n^{(i)},x^{(i)})$ to $(n^{(j)},x^{(j)})$. Therefore, one can discard these $i$'s in the above definition since for $i=j$ and $x \in  C_j \cap p_x(\Gamma)$, $\psi_{C_j}(x) \in \R$.

Consequently, we can write the function $\phi$ in a similar fashion as $\psi$, namely

$$ \phi (n) =\inf_{i \in I, k \in \N} \;\;\inf_{(n_s,x_s)\in \Gamma^k,\, (n_0,x_0) = (n^{(i)},x^{(i)}),n_{k+1}=n} c_{ij} + \sum_{s=0}^{k} c(n_{s+1}, x_s) -c(n_s,x_s).$$
 
 For $(n_0,x_0)$ for which $c_{ij}$ is finite (and $i\neq j$), the expression in the RHS above can be written as $c(\tilde \gamma)$ where $\tilde \gamma$ is the concatenation of the $c$-path $\gamma_{ij}$ together with the $c$-path from $(n_0,x_0)$ to $(n,x)$ described in the formula. Consequently, for $n=\bar n$ if we further add to $\tilde \gamma$ above the $c$-path $\{(\bar n, \bar x), (n,x)\}$, one easily get the expected formula \eqref{eq-pract-phi} by considering a minimizing sequence of $c$-paths relative to $\phi(\bar n)$.
\end{proof}

\subsection{On a first order uniqueness of the solution}

In this part, we prove:

\begin{thm}\label{th-uniqueness}
Let $\lambda$ and $\mu$ two probability measures on $\S$,   and assume that $\lambda$ is absolutely continuous. Assume there are two convex bodies $K,L \in \K_0$ solutions to \eqref{eq-GIPro}. Then, the maps $\mathcal{G}_k\circ \vec{\rho}_K$ and $ \mathcal{G}_L\circ \vec{\rho}_L$ coincide $\lambda$-a.e. as multivalued maps, namely for any Borel set $\omega$

$$ \lambda ((\mathcal{G}_k\circ \vec{\rho}_K(\omega)) \Delta \, (\mathcal{G}_L\circ \vec{\rho}_L(\omega)))=0.$$

\end{thm}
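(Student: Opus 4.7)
The plan is to realize $K$ and $L$ as optimal transport plans for the same pair of marginals $(\lambda,\mu)$ and then to exploit the $c$-twist property of the cost at differentiability points of a common Kantorovitch potential. First I would associate to $K$ the plan $\pi_K := (\mathrm{id}, T_K)_{\ast} \lambda$, where $T_K(n)$ is the $\lambda$-a.e.\ defined point $x \in \S$ such that $n$ is an outward normal to $K$ at $\vec{\rho}_K(x)$; the $\lambda$-a.e.\ single-valuedness here is exactly the argument invoked just after Theorem \ref{KantoPotenI} via \cite[Lemma 5.2]{bak} combined with the absolute continuity of $\lambda$. Oliker's change of functions shows that the admissible pair $(-\log h_K, \log \rho_K)$ achieves equality in \eqref{eq-def-A} on $\supp \pi_K$, so $\int c\, d\pi_K = \int(-\log h_K)\, d\lambda + \int \log \rho_K\, d\mu$ and $\pi_K$ is optimal by the duality inequality \eqref{eq-Kanto-ineq}; the same holds for $\pi_L$. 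Because the transport cost is linear in the plan, $\pi_{\mathrm{av}} := \tfrac12(\pi_K+\pi_L) \in \Gamma(\lambda,\mu)$ is also optimal.

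Next I would apply Theorem \ref{KantoPotenI} to $\pi_{\mathrm{av}}$ to obtain a Lipschitz Kantorovitch potential $\phi$ with
$$\supp \pi_{\mathrm{av}} \cap \{c<+\infty\} \;\subset\; \partial_c \phi.$$
Using \eqref{eq-angle-bound} one gets $\supp \pi_K, \supp \pi_L \subset \{c<+\infty\}$, so the supports of both plans lie in $\partial_c \phi$. Rademacher's theorem applied in local charts on $\S$ guarantees differentiability of $\phi$ at $\lambda$-a.e.\ point. At such a point $n_0$ and for any $x$ with $(n_0,x)\in \partial_c\phi$, the first-order optimality condition for $n\mapsto c(n,x) - \phi(n)$ yields (taking the tangential gradient on $\S$)
$$\nabla_n c(n_0, x) \;=\; n_0 \,-\, \frac{x}{\langle n_0, x\rangle} \;=\; \nabla \phi(n_0).$$
Equating the right-hand side for two candidates $x_1, x_2$ in the open hemisphere $\{\langle n_0,\cdot\rangle > 0\}$ and taking norms shows $\langle n_0, x_1\rangle = \langle n_0, x_2\rangle$ and hence $x_1 = x_2$; this is the $c$-twist property. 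Therefore the set $\{x : (n_0,x)\in \partial_c\phi\}$ is at most a singleton, and consequently $T_K(n_0) = T_L(n_0)$ at $\lambda$-a.e.\ $n_0$.

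Finally, for an arbitrary Borel $\omega \subset \S$, the set $\mathcal{G}_K \circ \vec{\rho}_K(\omega)$ coincides with $T_K^{-1}(\omega)$ up to a $\lambda$-null set (same $\lambda$-a.e.\ single-valuedness of the normal map as before), and likewise $\mathcal{G}_L \circ \vec{\rho}_L(\omega) = T_L^{-1}(\omega)$ modulo $\lambda$-null. Since $T_K = T_L$ $\lambda$-a.e., the two preimages differ only on a subset of $\{T_K \neq T_L\}$, proving \eqref{eq-fo-uni}. I expect the bulk of a full write-up to lie in the careful bookkeeping that $\Gamma_K$ and $\Gamma_L$ are essentially graphs over the first coordinate and in the explicit verification of the $c$-twist property; everything else reduces to Theorem \ref{KantoPotenI} plus standard optimal-transport duality.
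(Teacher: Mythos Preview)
Your approach via the $c$-twist argument is sound in spirit and genuinely different from the paper's proof, but there is a technical gap in your invocation of Theorem~\ref{KantoPotenI}. That theorem assumes, in addition to the weak Aleksandrov condition (which is indeed automatic here by the remark following the statement), that the support of $\mu$ is \emph{not contained in a closed hemisphere}. Theorem~\ref{th-uniqueness} carries no such hypothesis, and nothing in the existence of solutions $K,L\in\K_0$ forces it. Fortunately the detour through Theorem~\ref{KantoPotenI} is unnecessary: you already have in hand the Lipschitz Kantorovitch potential $\phi_K=-\log h_K$, and since both $\pi_K$ and $\pi_L$ are optimal while $(\phi_K,\psi_K)$ attains the dual supremum, standard duality gives $\supp\pi_L\cap\{c<+\infty\}\subset\partial_c\phi_K$ directly. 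Your twist computation then applies verbatim with $\phi=\phi_K$, yielding $T_K=T_L$ $\lambda$-a.e.\ and the conclusion follows as you wrote.

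By contrast, the paper's own argument avoids differentiability and the twist condition altogether. It observes that $\supp\pi_o\cap\{c<+\infty\}\subset\partial_c\phi_K\cap\partial_c\phi_L$ for any optimal plan $\pi_o$, uses the identity $p_n\bigl(\partial_c\phi_K\cap(\S\times\omega)\bigr)=\mathcal{G}_K\circ\vec{\rho}_K(\omega)$, and deduces by a pure measure-theoretic chain of inequalities that $\lambda\bigl(\mathcal{G}_K\circ\vec{\rho}_K(\omega)\cap\mathcal{G}_L\circ\vec{\rho}_L(\omega)\bigr)\geq\mu(\omega)$, which must then be an equality since each factor already has $\lambda$-measure $\mu(\omega)$. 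This route is shorter and uses only set containments plus the marginal conditions, whereas your route gives the slightly stronger pointwise statement $T_K=T_L$ $\lambda$-a.e.\ at the cost of the Rademacher/twist machinery.
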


\begin{remark}
Recall that for a convex in $\K_0$, the fact that the origin belongs to \emph{the interior} of the convex body prevents the angle between a direction $x$ and a normal vector $n \in \mathcal{G} \circ \vec{\rho} (x)$ from being too close to $\pi/2$. In other terms, the measures $\lambda$ and $\mu$ \emph{must} satisfy the weak Aleksandrov condition for a sufficiently small $\alpha>0$. Consequently, as proved in Theorem \ref{thm-finite-cost-plan}, the mass transport problem relative to $\mu, \lambda$ and the cost $c$ is well-posed. We keep the notation $\pi_o$ for the optimal plan.

\end{remark}

\begin{proof}
Observe that a solution $(\rho,h) \in \{(\rho_K,h_K),(\rho_L,h_L)\}$ to the Gauss image problem becomes, after applying Oliker's change of functions, a solution to the Kantorovitch problem; we denote by  $(\phi_K,\psi_K)$ and $(\phi_L, \psi_L)$ these solutions. Indeed, given a convex body $K \in \K_0$,  up to a Lebesgue negligible  set $N_K$, for all $n \in \S\setminus N_K$ there exists a unique $x \in \S$ such that $n \in \mathcal{G}_K (\vec{\rho}_K(x))$. Besides if we denote by $T_K(n)$ such a $x$ then it is known that $T_K$ is continuous on $\S \setminus N_K$ \cite{Rocka70}. Thus for any Borel set $\omega \subset \S$,
$$\mu(\omega) =\lambda ( \mathcal{G}_K \circ \vec{\rho}_K(\omega)) = \lambda(T_K^{-1} (\omega)),$$
in other terms the pushforward of $\lambda$ through $T_K$ is $\mu$. This property is denoted by ${T_K}_{\sharp} \lambda = \mu$.

Reasoning as in the proof of Theorem \ref{KantoPotenI}, we get for $(\phi_K,\psi_K)$:

\begin{eqnarray*}
\int_{\S \times \S} c(n,x) \, d(Id,T_K)_{\sharp}\lambda (n) &=&   \int_{\S} c(n,T_K(n)) \, d\lambda (n) \\
&= & \int_{\S}\big( \phi_K(n)+ \psi_K(T_K(n))\big) \, d\lambda (n) \\
&=&   \int_{\S} \phi_K(n) \, d\lambda (n) + \int_K \psi_K(T_K(n)) \, d\lambda (n) \\
&=&  \int_{\S} \phi_K(n) \, d\lambda (n) + \int_K \psi_K(x) \, d\mu (x). 
\end{eqnarray*}

Therefore $(\phi_K,\psi_K)$ is a solution to the dual problem (and $(Id,T_K)_{\sharp}\lambda $ is an optimal plan). The same properties hold for the convex body $L$ and the corresponding objects.

Consequently, 
\begin{equation}\label{eq-triv}
\Gamma = \supp \pi_o \cap \{c<+\infty\} \subset \partial_c\phi_K \cap \partial_c \phi_L.
\end{equation}

Recall that $\pi_o (\Gamma)=1$ since the mass transport problem is well-posed. Besides, for $\omega$ a Borel set in $\S$, observe that

$$ \partial_c \phi_K\cap (\S \times \omega) = \{(n,x); n \in \mathcal{G}_K \circ \vec{\rho}_K (x), x \in \omega\},$$

thus 
$$p_n(\partial_c \phi_K \cap (\S \times \omega)) =  \mathcal{G}_K \circ \vec{\rho}_K(\omega)$$
and the same property holds for $L$ instead of $K$. Now, according to \eqref{eq-triv}
$$\pi_o(\partial_c \phi_K  \cap \partial_c \phi_L\cap (\S \times \omega)) = \pi_o (\S \times \omega)= \mu(\omega).$$

Finally

$$ \lambda ( \mathcal{G}_K \circ \vec{\rho}_K(\omega) \cap  \mathcal{G}_L \circ \vec{\rho}_L(\omega)) \geq \pi_0 (p_n^{-1} (p_n(\partial_c \phi_K  \cap \partial_c \phi_L\cap (\S \times \omega)))) \geq \mu(\omega).$$
Since $K$ and $L$ are solutions to the Gauss image problem, equality actually holds in the above inequality:
$$
\lambda ( \mathcal{G}_K \circ \vec{\rho}_K(\omega) \cap  \mathcal{G}_L \circ \vec{\rho}_L(\omega)) = \lambda ( \mathcal{G}_K \circ \vec{\rho}_K(\omega))=\lambda (  \mathcal{G}_L \circ \vec{\rho}_L(\omega))$$
and the result is proved.

\end{proof}

\section*{Appendix}

In this appendix, we first prove the following result which is a minor adaptation of the appendix of \cite{Bertrand-GeomD}. 

\begin{lemma}\label{parti} Let $\theta$ be a  Borel probability measure on the unit sphere $\S$ endowed with the spherical distance $d$. For any $\kappa>0$, there exists a finite partition $(P_i)_{1\leq i\leq K}$ of $\S$ (depending on $\kappa$) such that for all $i$,  the interior $\stackrel{\circ}{P}_i$ of ${P}_i$ is nonempty, $diam (P_i) < \kappa$ and $\theta (\partial P_i)=0$. If we also assume that $\theta$ is absolutely continuous with respect to the uniform measure on $\S$, we can further require $\theta(P_i)$ to be a rational number.
\end{lemma}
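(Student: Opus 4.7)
Plan. I would prove the lemma in two stages: first construct a partition satisfying the three metric/topological requirements via a cover-and-atoms argument, and then (in the absolutely continuous case) modify it to achieve rational measures by exploiting the non-atomicity of $\theta$.

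For the first stage, observe that for any $x\in\S$ the spherical shells $\partial B(x,r)$ are pairwise disjoint, so $\theta(\partial B(x,r))>0$ for at most countably many values of $r$; hence for every $x$ one may pick $r_x\in(0,\kappa/2)$ with $\theta(\partial B(x,r_x))=0$. Compactness of $\S$ extracts a finite subcover $\{B(x_j,r_j)\}_{j=1}^N$. I would then form the atoms of this cover,
$$A_S := \bigcap_{j\in S} B(x_j,r_j) \cap \bigcap_{j\notin S} \bigl(\S\setminus B(x_j,r_j)\bigr),\qquad \emptyset\neq S\subset\{1,\dots,N\}.$$
These Borel sets partition $\S$; each is contained in one of the chosen balls (so has diameter $<\kappa$) and has boundary inside $\bigcup_j\partial B(x_j,r_j)$, which is $\theta$-null. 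If an atom $A_S$ has empty interior, a direct check based on the above formula for the interior shows $A_S\subset\bigcup_j\partial B(x_j,r_j)$ and hence $\theta(A_S)=0$; I absorb any such thin atom into an atom with nonempty interior sharing a common ball index (this preserves the diameter bound and, because $\overline{A_S}$ still lies in the $\theta$-null union of shells, keeps the null-boundary condition). Empty atoms are discarded. The result is a partition $(P_i)_{i=1}^K$ satisfying the first three requirements.

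For the second stage, assume $\theta$ is absolutely continuous with respect to the uniform measure; then $\theta$ is non-atomic, so by Sierpinski's theorem any Borel set $A$ can be split into Borel subsets of arbitrarily prescribed $\theta$-masses in $[0,\theta(A)]$. I choose rationals $q_1,\dots,q_K$ close to $\theta(P_1),\dots,\theta(P_K)$ with $\sum q_i=1$. To realise $(q_i)$ as the measures of an adjusted partition, I would run Stage 1 from a cover of radius $<\kappa/4$, so that two atoms sharing a boundary always lie in a single ball of diameter $<\kappa/2$. I then proceed sequentially: for each $i<K$, transfer across the shared boundary between $P_i$ and a chosen adjacent piece a Borel region of $\theta$-mass exactly $q_i-\theta(P_i)$ (or minus that), supplied by Sierpinski's theorem and trimmed to have boundary inside shells $\partial B(x,r)$ with $\theta(\partial B(x,r))=0$, as in Stage 1. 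After $K-1$ transfers the last piece inherits a rational measure from $\sum q_i=1$.

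The main obstacle is precisely this second stage: performing the mass transfers while simultaneously preserving the diameter bound, the nonempty-interior condition, and the vanishing of the $\theta$-measure of boundaries. Starting from a sufficiently fine initial cover and localising each transfer within a single small ball is what makes the diameter and interior conditions survive; the null-boundary condition is guaranteed by choosing each transferred region as a difference of balls whose boundary spheres are $\theta$-null, exactly as in Stage 1.
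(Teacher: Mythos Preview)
Your Stage~1 is correct and is a genuinely different construction from the paper's. The paper proceeds by induction on the dimension, building ``spherical boxes'' in spherical coordinates: on $\mathbb{S}^1$ one takes half-open arcs and nudges their endpoints off the countably many atoms of $\theta$; on $\mathbb{S}^m$ one first slices into annuli $\{R_i < d(N,\cdot)\le R_{i+1}\}$ with $\theta$-null bounding spheres and then applies the $(m-1)$-dimensional case to the pushforward of $\theta\res C_i$ onto the equatorial $\mathbb{S}^{m-1}$. Your atoms-of-a-finite-cover argument is a clean alternative for this first part.

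The difference becomes decisive in Stage~2, where your argument has a genuine gap. In the paper's construction the pieces depend on finitely many real parameters (the radii $R_i$ and the lower-dimensional arc endpoints); since $\theta$ is absolutely continuous, moving any one of these changes the corresponding $\theta(P_s)$ continuously, so one perturbs them layer by layer and hits rational values by the intermediate value theorem. All three side conditions (nonempty interior, diameter bound, $\theta$-null boundary) are preserved \emph{automatically} because the product structure of the boxes is untouched. Your post-processing scheme does not enjoy this. Sierpi\'nski's theorem produces a Borel set of prescribed mass with no topological control whatsoever: nothing prevents $\partial E$ from carrying positive $\theta$-mass or $P_j\setminus E$ from losing its interior. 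Your last paragraph retreats to taking $E$ as (part of) a ball with $\theta$-null boundary sphere, but then Sierpi\'nski is no longer doing any work, and you have not shown that such a ball can realise the \emph{exact} required mass while leaving $P_j\setminus E$ with nonempty interior. There is also a structural problem with ``proceed sequentially for each $i<K$'': the pieces on $\mathbb{S}^m$ are not linearly ordered by adjacency, so fixing $P_1,\dots,P_{K-1}$ in turn and dumping the discrepancy into ``a chosen adjacent piece'' will in general revisit already-adjusted pieces and undo their rationality; at minimum you would need a spanning-tree ordering, and even then transfers through pieces of zero $\theta$-mass are problematic.

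The simplest repair is to abandon the post-processing idea and build the partition from a continuously parametrised family from the outset, as the paper does; the rational-measure requirement then reduces to a direct application of the intermediate value theorem.
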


\begin{proof}

The proof is by induction on the dimension $m$. Let us recall the expression of the spherical distance $d$ in spherical coordinates say $(t,u)$ where $t \in [0,\pi]$ and $u \in \mathbb{S}^{m-1}$:

\begin{equation}\label{eq-sdis}
\cos d((t,u),(s,v))= \cos s\, \cos t + \sin s \, \sin t \, \cos d(u,v),
\end{equation}
where $d(u,v)$ is the spherical distance between $u$ and $v$ in $\mathbb{S}^{m-1}$. We also set $p_t$ (resp $p_u$) the projections associated to these coordinates on $(0,\pi)\times \mathbb{S}^{m-1}$.

For $m=1$, fix a number $\alpha_1>0$. Then, partition $\mathbb{S}^1$ into finitely many left-open, right-closed segments $(I_j)_{1\leq j\leq K_1}$ whose length $l(I_j)$ satisfies $ l(I_j)<\alpha_1$; up to slightly moving the intervals -since the condition on the diameter is open- we can further require that $\theta (\partial I_j)=0$ since $\theta$ has at most countably many atoms. When $\theta$ is absolutely continuous, again one can slightly move the boundary of the intervals to make sure that $\theta(I_j) \in \Q$ for $j=1, \cdots, K_1-1$ while preserving the other properties; $\theta(I_{K_1}) \in \Q$ follows from  $\theta(\mathbb{S}^1)=1$.

For $m=2$, fix a point $N\in \mathbb{S}^2$ and $\alpha_2>0$. Consider a partition $(C_i)_{1 \leq i \leq K_2}$ where $C_1$ is the closed spherical cap centered at $N$ with radius $R_1$, $C_i= \{z\in \mathbb{S}^2; R_{i} <  d(N,z) \leq R_{i+1}\}$ for $ i \in \{2,\cdots,K_2-1\}$ and $C_{K_2}$ is the open ball with radius $\pi-R_{K_2}$ and center $-N$. We require that the $(R_i)$'s satisfy: 
$$ \alpha_2/2 < R_1< \alpha_2, \quad    \alpha_2/2 < R_i-R_{i-1} < \alpha_2, \quad  \pi-R_{K_2} < \alpha_2.$$
 Since the atoms of  the measures $(p_t)_{\sharp} (\theta)$ correspond to the radius $r \in (0,\pi)$ for which the sphere $S(N,r):=\{u \in \S; d(u,N) =r\}$ has positive $\theta$-mass, we can further choose the radii $R_i$'s so that $ \theta (S(N,R_i))=0$ for $ i \in \{1,\cdots,K_2\}$. Similarly, we can assume that $\theta (C_i)\in \Q$ when $\theta$ is absolutely continuous. Now, applying the case $m=1$ to each measure $((p_u)_{\sharp} (\theta \res C_i))_{1 \leq i\leq K_2})$, we get a partition $(P_s)_{1\leq s\leq K}$ of $\mathbb{S}^2$ (namely whose elements are of the form $(C_i \cap (p_u)^{-1}(I_j^i))_{i,j}$, where $(I_j^i)_j$ is the partition of $\mathbb{S}^1$ corresponding to $(p_u)_{\sharp} (\theta \res C_i)$). The $(P_s)$'s have nonempty interiors by construction. In addition to that, we recall that the measures $((p_u)_{\sharp} (\theta \res C_i))_{1 \leq i\leq K_2})$ on $\mathbb{S}^1$ are absolutely continuous with respect to the uniform measure on $\mathbb{S}^{1}$ whenever $\theta$ is so. Using that
$$ \theta (P_s)  = \left(\int_{R_{i}}^{R_{i+1}} \sin r \, dr \right)\cdot  (p_u)_{\sharp} (\theta \res C_i)(I^i_j),$$
when $P_s= C_i \cap (p_u)^{-1}(I_j^i)$; we can further assume  $\theta(P_s) \in \Q$ whenever $\theta$ is a.c.. Finally, the expression of the spherical distance \eqref{eq-sdis} implies that the diameter of any $P_s$ is smaller than $\kappa$ provided $\alpha_1$ and $\alpha_2$ are chosen sufficiently small.

The higher dimensional case easily follows from the arguments used for $m=2$.
\end{proof}

Building on the previous lemma, we can prove the mass transport problem \eqref{massdefi} is well-posed. The proof is a straightforward adaptation of \cite[Proof of Theorem 4.1]{Bertrand-GeomD}.

\begin{thm}Let $\lambda$ and $ \mu$ be two probability measures on $\S$ satisfying the assumptions of Theorem \ref{KantoPoten}. There exists a plan $\pi^{\alpha} \in \Gamma(\lambda, \mu) $ such that
$$ \supp \pi^{\alpha}\subset \{(n,x) \in \S\times \S; d(n,x) \leq \pi/2 -\alpha\}.$$
\end{thm}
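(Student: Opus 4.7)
My plan is to reduce the construction of $\pi^\alpha$ to a finite bipartite matching problem that can be solved by Hall's theorem using the weak Aleksandrov condition, and then to pass to a weak limit. Fix a scale $\kappa<\alpha/4$ and apply Lemma \ref{parti} to the Borel measure $\lambda+\mu$, refining the partition using the absolute continuity of $\lambda$ so that $\lambda(P_i)\in\Q$. This yields a finite partition $(P_i)_{1\leq i\leq K}$ of $\S$ with $\di(P_i)<\kappa$, $\lambda(\partial P_i)=\mu(\partial P_i)=0$, and $\lambda(P_i)\in\Q$. Let $N$ be a common denominator for the values $\lambda(P_i)$, split the $\lambda$-mass of each $P_i$ into $N\lambda(P_i)$ atoms of mass $1/N$, and approximate $\mu$ by the atomic measure $\mu^d_N:=\sum_i \lfloor N\mu(P_i)\rfloor/N \cdot \delta_{x_i}$ for representative points $x_i\in P_i\cap\supp\mu$; the total mass defect is at most $K/N$ and vanishes as $N\to\infty$.

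Next, consider the bipartite graph whose left vertices are the $\lambda$-atoms and right vertices the atoms of $\mu^d_N$, with an edge connecting an atom in $P_i$ to an atom in $P_j$ whenever $d(P_i,P_j)\leq \pi/2-2\alpha$. The triangle inequality together with $\di(P_i),\di(P_j)<\kappa<\alpha/4$ forces every edge to be supported on a pair $(n,x)$ with $d(n,x)\leq\pi/2-3\alpha/2$, well inside $\{d\leq\pi/2-\alpha\}$. A perfect matching of the right atoms into left atoms, lifted to a continuous plan via $\pi^{\alpha,d}_{N,\kappa}:=\sum_{i,j} f_{ij}\cdot (\lambda|_{P_i}/\lambda(P_i))\otimes \delta_{x_j}$ (where $f_{ij}$ is the fraction of matchings from $P_i$ to $P_j$), has left marginal exactly $\lambda$, right marginal $\mu^d_N$, and support in $\{d\leq\pi/2-\alpha\}$. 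Driving $\kappa\to 0$ and $N\to\infty$ so that $\mu^d_N$ approaches $\mu$ weakly, and extracting a weakly convergent subsequence, the closedness of the support condition produces the required $\pi^\alpha\in\Gamma(\lambda,\mu)$.

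The core of the proof, and the expected main obstacle, is the verification of Hall's condition at each scale. For a subset $J$ of right atoms, set $F:=\bigcup_{j\in J}P_j$; if $n\in F_{\pi/2-2\alpha}$ then $d(n,x)<\pi/2-2\alpha$ for some $x\in F$, so the cell $P_i$ containing $n$ satisfies $d(P_i,P_j)<\pi/2-2\alpha$ for the index $j$ with $x\in P_j$. Hence $F_{\pi/2-2\alpha}$ is contained in the union of the cells $P_i$ adjacent to $J$, and Hall's inequality reduces to $\mu^d_N(F)\leq \lambda(F_{\pi/2-2\alpha})$, which in turn follows from $\mu(F)\leq \lambda(F_{\pi/2-2\alpha})$. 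When $F$ lies in a closed hemisphere, this is exactly \eqref{eq-WAA}. When $F$ is not contained in any closed hemisphere, one must decompose $F$ as a finite union of subsets each lying in a closed hemisphere (for instance by picking $x_1,x_2\in F$ with $\langle x_1,x_2\rangle<0$ and splitting $F$ along the associated equator), apply \eqref{eq-WAA} piece by piece, and combine, using in an essential way the hypothesis that $\supp\mu$ itself is not contained in any closed hemisphere so that the resulting overcounting is absorbed. This combinatorial step is the delicate heart of the argument and the place where the weak Aleksandrov condition is exploited; the details follow the scheme of the proof of Theorem 4.1 in \cite{Bertrand-GeomD}.
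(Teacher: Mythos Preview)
Your strategy is the same as the paper's: discretize both measures via a fine partition, verify Hall's marriage condition using the weak Aleksandrov inequality, produce a plan supported on $\{d\le\pi/2-\alpha\}$, and pass to a weak limit. The one structural difference is that the paper first mollifies $\mu$ into an absolutely continuous $\hat\mu$ so that \emph{both} measures can be discretized with rational masses and then matched by an honest bijection between two sets of equal cardinality $M$; you skip mollification and instead approximate $\mu$ by the floor measure $\mu^d_N$, carrying a mass defect $\le K/N$. This variant is workable but the paper's route is cleaner, since it avoids juggling the two independent parameters $\kappa$ and $N$ and yields an exact matching rather than a one-sided one.

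There is, however, a genuine gap in your verification of Hall's condition. The weak Aleksandrov inequality \eqref{eq-WAA} is only assumed for closed $F$ contained in a closed hemisphere, and the argument you sketch for general $F$ --- split $F$ along an equator into $F_+\cup F_-$, apply \eqref{eq-WAA} to each piece, and ``combine'' --- does not work. Summing $\mu(F_\pm)\le\lambda\big((F_\pm)_{\pi/2-2\alpha}\big)$ yields
\[
\mu(F)\le \lambda\big((F_+)_{\pi/2-2\alpha}\big)+\lambda\big((F_-)_{\pi/2-2\alpha}\big)
=\lambda(F_{\pi/2-2\alpha})+\lambda\big((F_+)_{\pi/2-2\alpha}\cap (F_-)_{\pi/2-2\alpha}\big),
\]
and the overlap term on the right is not controlled by anything; the hypothesis that $\supp\mu$ is not in a closed hemisphere does not ``absorb'' it. The paper's appendix proof, to be fair, simply applies (\ref{me33}) to an arbitrary index set $I$ without discussing this point either, but your explicit attempt at a fix is incorrect, and deferring the ``delicate heart of the argument'' to the scheme of \cite{Bertrand-GeomD} is not a proof.
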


\begin{proof}
According to the weak Alexsandrov condition, there exists a number $\alpha >0$ such that
\begin{equation}\label{eq-local-for}
 \mu(F) \leq \lambda (F_{\pi/2-2\alpha}),
\end{equation} 
for any closed set $F$ contained in a closed hemisphere (\ref{eq-WAA}). 

The first step of the proof is to show that we can approximate $\mu$ by a finitely supported measure  that still satisfies the above condition up to sligthly decreasing $\alpha$. To this end, we first approximate $
\mu$ by $(\mu * \rho_{\ep})_{\ep < \alpha/4}$, $\rho_{\ep}$ being a family of standard radial mollifiers on $\S$. We fix such an $\ep$ and set $ \hat{\mu}=\mu * \rho_{\ep}$;  by definition, $\hat{\mu}$ is absolutely continuous 
with respect to the uniform measure and satisfies (\ref{eq-local-for}) with $\frac{7\alpha}{4}$ instead of $2\alpha$. The next step is to use a fit partition for $\hat{\mu}$  as in Lemma \ref{parti}: there exists a finite partition 
$(U_i)_{i \in \{1, \cdots N\}}$ of $\S$ made of Borel sets with nonempty interiors such that  
\begin{equation}\label{blorp}
diam (U_i) < \alpha/4 \mbox{ and }   \hat{\mu}(U_i) \in \Q.
\end{equation}

For each $U_i$, choose $x_i \in U_i$ and set 
$$ \mu_e= \sum_{i=1}^{N} \hat{\mu}(U_i)\delta_{ x_i},$$

By assumption on the diameter of $U_i$, $\mu_e$ satisfies  for all closed set $F$ contained in a closed hemisphere:
 \begin{equation}\label{me33} 
  \mu_e (F) \leq \lambda \left(\cup_{x \in F} B\left(x, \pi/2-\frac{3\alpha}{2}\right)\right)
\end{equation}
and the proof of the first step is complete. According to (\ref{blorp}), $\mu_e$ can be rewritten (up to repeating some the $\tilde x_i$'s)
$$ \mu_e= \frac{1}{M} \sum_{i=1}^M \delta_{\tilde x_i}$$
with $M \in \N\setminus\{0\}$ and $\{\tilde x_1,\cdots,  \tilde x_M\}=\{x_1,\cdots,x_N\}$.
.

The next step is to show the existence of $\pi_e \in \Gamma (\lambda, \mu_e)$ such that
 $$ \int_{\S \times \S} c(n,x) \,d\pi_e(n,x) \leq -\ln \left(\sin \left({\alpha}\right)\right).$$
 
To this aim, we now apply Lemma \ref{parti} to the measure $\lambda$ which is absolutely continuous with respect to the uniform measure. The same argument as the one applied to $\hat{\mu}$ leads to the existence of $M'$ and a partition  $(V_s)_{s=1}^{r}$ of $\S$ which satisfies \eqref{blorp}. We first decompose $\lambda$ as follows

$$ \lambda = \sum_{s=1}^r \lambda(V_s)\, {\lambda_s},$$
where  ${\lambda_s} = \frac{1}{\lambda(V_s)} \lambda\res{V_s}$. Using that $\lambda(V_s) \in \Q$ we can proceed as we did for $\mu_e$ and, repeating some of the $ {\lambda_s}$'s if necessary,  rewrite this equality as

$$ \lambda = \sum_{j=1}^{M'} \frac{1}{M'} \tilde{\lambda_j},$$

where $\tilde{\lambda_j} = \frac{1}{\lambda(\tilde V_j)} \lambda\res{\tilde V_j}$ and $\{\tilde V_1,\cdots,  \tilde V_{M'}\}=\{V_1,\cdots,V_r\}$. Note that the above equlity implies for any $s=1,\cdots,r$
 $$\lambda(V_s) = \frac{\sharp \{j; \tilde V_j=V_s\}}{M'}.$$
Now, up to replacing $M$ and $M'$ by the product $M M'$ and repeating the $(\tilde x_i)$'s and the $(\tilde V_j)$'s, we can assume that $M=M'$. Thus we have a collection of sets $(\tilde V_i)_{i=1}^{M}$ and $\{\tilde x _i\} _{i=1}^{M}$ of $\S$ such that $\di( \tilde V_i) \leq \frac{\alpha}{4}$ and $\lambda(V_s)= \frac{\sharp \{j; \tilde V_j=V_s\}}{M}$ for $s=1,\cdots,r$. Finally, we claim that the set-valued map 
 $$
 \begin{array}{rccc}
 F: &\{1,\cdots,M\} &\longrightarrow &\left\{\tilde V_j, j\in \{1, \cdots, M\}\right\} \\
       &        i & \longmapsto & \{\tilde V_j ; \tilde V_j \subset B(\tilde x_i, \pi/2- {\alpha})\}
  \end{array}
  $$
 satisfies the assumptions of the Marriage lemma. Indeed, consider $I$ a subset of $\{1,\cdots,M\}$. Thanks to (\ref{me33}), we have
 $$ \frac{\sharp I }{M} \leq \mu_e (\{\tilde x_i, i \in I\}) \leq \lambda \Big(\cup_{i \in I}B\Big(\tilde x_i, \pi/2-\frac{3\alpha}{2}\Big) \Big).$$
Now, by assumption on the $V_s$'s, we get
 $$ \bigcup_{i \in I}  B\Big(\tilde x_i, \pi/2-\frac{3\alpha}{2}\Big)  \subset \bigcup_{  V_s; V_s \subset B(\tilde x_i, \pi/2 -\alpha)} V_s.$$  
 
 Therefore  $$\mu_e (\{\tilde x_i, i \in I\}) \leq \sum_{ V_s \subset B(\tilde x_i, \pi/2 -\alpha)} \lambda(V_s) =\sum_{ V_s \subset B(\tilde x_i, \pi/2 -\alpha)} \frac{\sharp \{j; \tilde V_j=V_s\}}{M}= \frac{\sharp \{j; \tilde V_j \in F(I)\}}{M},$$
 and the assumptions of the Marriage lemma are satisfied. Consequently, there exists a one-to-one map $f:   \{1,\cdots,M\} \longrightarrow \{\tilde V_i ; i \in \{1, \cdots ,M\}\}$ such that for all $i$, $f(i) \subset B(\tilde x_i, \pi/2- {\alpha})$. This fact clearly entails that the plan which maps the mass $1/M$ located at $\tilde x_i$ uniformly on $f(i)$ is a plan $\pi_e$ in $\Gamma(\lambda,\mu_e)$ such that
\begin{equation}\label{me35} 
\pi_e \left(\left\{(n,x) \in (\S)^2; d(n,x) \leq \pi/2- {\alpha}\right\}\right)=1.
\end{equation}
Note that the bound does not depend on $M$ nor on $\ep$. Therefore, by letting $\ep$ go to $0$, we can construct by the same method a sequence of empirical measures which converges to $\mu$, all of whose elements satisfy (\ref{me35}). Then using the Banach-Alaoglu theorem, we can extract a subsequence of plans which converges to an element of $\Gamma(\lambda,\mu)$ that satisfies (\ref{me35}).
\end{proof}


\bibliographystyle{plain}
\bibliography{bert-bi}


\end{document}